\numberwithin{equation}{section}
\numberwithin{equation}{section}		% Equationnumbering: section.eq#
\numberwithin{figure}{section}			% Figurenumbering: section.fig#
\numberwithin{table}{section}				% Tablenumbering: section.tab#
\DeclareFontShape{T1}{lmr}{b}{sc}{<->ssub*cmr/bx/sc}{}
\DeclareFontShape{T1}{lmr}{bx}{sc}{<->ssub*cmr/bx/sc}{}
\newcommandx{\unsure}[2][1=]{\todo[linecolor=red,backgroundcolor=red!25,bordercolor=red,#1]{#2}}
\newcommandx{\change}[2][1=]{\todo[linecolor=blue,backgroundcolor=blue!25,bordercolor=blue,#1]{#2}}
\newcommandx{\info}[2][1=]{\todo[linecolor=OliveGreen,backgroundcolor=OliveGreen!25,bordercolor=OliveGreen,#1]{#2}}
\newcommandx{\improvement}[2][1=]{\todo[linecolor=black,backgroundcolor=black!25,bordercolor=black,#1]{#2}}
\newcommandx{\thiswillnotshow}[2][1=]{\todo[disable,#1]{#2}}
\crefname{proposition}{Proposition}{Propositions}
\crefname{equation}{}{}
\newtheorem{thm}{Theorem}[section]
\newtheorem{cor}[thm]{Corollary}
\newtheorem{lemma}[thm]{Lemma}
\newtheorem{remark}[thm]{Remark}
\newtheorem{prop}[thm]{Proposition}
\crefname{assumption}{Assumption}{Assumptions}
\crefname{definition}{Definition}{Definitions}
\crefname{corollary}{Corollary}{Corollaries}
\crefname{enumi}{item}{items}
\DeclareMathOperator{\R}{\mathbb{R}}
\renewcommand{\tilde}{\widetilde}
\newcommand\thankssymb[1]{\textsuperscript{{#1}}}
\newcommand{\dd}{d}
\newcommand{\p}{\partial}
\newcommand{\nm}{\noalign{\smallskip}}
\newcommand{\ds}{\displaystyle}
\renewcommand{\epsilon}{\varepsilon}
\renewcommand{\tilde}{\widetilde}
\title[Exceptional point enhanced detection]{Exceptional point enhanced small particle detection in systems of subwavelength resonators}
\author{Jinghao Cao\thankssymb{1} and Jörg Nick\thankssymb{2}} 
\thanks{\thankssymb{1} Computing and Mathematical Sciences Department, California Institute of Technology, 1200 East California Boulevard, Pasadena, CA 91125, USA, E-mail: jinghao.cao@caltech.edu} 
\thanks{\thankssymb{2} Seminar für Angewandte Mathematik, ETH Zürich, Rämistrasse 101,  CH-8092 Zürich, Switzerland. E-mail: joerg.nick@math.ethz.ch}
\begin{document}
\begin{abstract}
This paper considers the effects of small highly contrasted particles on the subwavelength resonances of a system of high-contrast resonators, with an application to sensing. The key technique is a multiple scattering expansion of the capacitance matrix associated with the perturbed system. At leading order, the perturbation of the scattering resonances is characterized by the associated term of the truncated multiple scattering expansion. When an exceptional point is present in the resonance structure, the perturbation critically affects the subwavelength resonances, which improves the sensitivity of a sensing task in the presence of noise. Numerical experiments demonstrate  the use of the proposed reconstruction techniques.
\end{abstract}
\maketitle
\section{Introduction}
A subwavelength resonator is a highly contrasted bounded 
inclusion that exhibits subwavelength resonances. Subwavelength resonances are frequencies at which the resonator strongly interacts with incident waves whose wavelengths can be larger by several orders of magnitude \cite{ammari.davies.ea2021Functional}. When systems of subwavelength resonators  operate at an exceptional point, their behaviour can be greatly influenced, even by very small perturbations. 
As a result, such systems are ideal building blocks for designing small and powerful sensors capable of detecting the presence of small particles such as viruses and nanoparticles. They can measure the shifts in the resonant frequencies of the structure caused by perturbations \cite{rechtsman2017applied, vollmer2008whispering, vollmer2008single} even though the noise is proportional to the size of the perturbation. 
	
An exceptional point is a point in parameter space at which two or more eigenvalues, and also the corresponding eigenvectors, coincide  \cite{heiss2012physics, miri2019exceptional}. It arises in non-Hermitian systems of subwavelength resonators by introducing gain and loss, which can be represented by the imaginary parts of complex-valued material parameters \cite{ammari.davies.ea2022Exceptional, ADHHY21}.

\section{The forward problem}
In this paper we consider a chain of high-contrast material inclusions called subwavelength resonators. More precisely, we let $$\mathcal D = \mathcal{D}_1\cup \dots \cup \mathcal{D}_N \subset \mathbb{R}^3,$$ be a union of simply connected, bounded domains of class $C^{1,s} $ with $0<s<1$. 
%Then the one-dimensional array is given by $\mathcal{D} = \bigcup_{i=1}^N \mathcal{D}_i$. 
Consider the following set of Helmholtz equations, of the unperturbed system of resonators:
\begin{equation}
    \label{helmholtz}
    \begin{cases}
    \ds \Delta u + k^2u = 0 \ & \text{in } \R^3 \setminus \overline{\mathcal{D}}, \\
    \nm
    \ds \Delta u + k_{\mathcal{D}}^2 u = 0 \ & \text{in } \mathcal{D}, \\
    \nm
    \ds  u|_{+} -u|_{-}  = 0  & \text{on } \p \mathcal{D} \text{ and } \p \Omega, \\
    \nm
    \ds \left.\delta \frac{\p u}{\p \nu} \right|_{+} - \left.\frac{\p u}{\p \nu} \right|_{-} = 0 & \text{on } \p \mathcal{D} \text{ and } \p \Omega, \\
    \nm
    \ds u \text{ satisfies an outgoing radiation condition}. &   
    \end{cases}
\end{equation}
 % \begin{equation}
 %     \label{helmholtz}
 %    \begin{cases}
 %     \ds \Delta u + k_m^2u = 0 \ & \text{in } \R^3 \setminus \overline{\mathcal{D}}, \\
 %     \nm
 %     \ds \Delta u + k_r^2 u = 0 \ & \text{in } \mathcal{D}, \\
 %     \nm
 %     \ds  u|_{+} -u|_{-}  = 0  & \text{on } \p \mathcal{D} \text{ and } \p \Omega, \\
 %     \nm
 %     \ds \left.\delta \frac{\p u}{\p \nu} \right|_{+} - \left.\frac{\p u}{\p \nu} \right|_{-} = 0 & \text{on } \p \mathcal{D} \text{ and } \p \Omega, \\
 %     \nm
 %     \ds u \text{ satisfies an outgoing radiation condition}. &   
 %     \end{cases}
 % \end{equation}
Here $\nu$ is the outgoing normal to $\partial \mathcal{D}$ and we denote the limits from outside and inside of $\mathcal{D}$ by $|_+$ and $|_-$. The inclusions $\mathcal{D}$ and the surrounding materials $\mathbb{R}^3\setminus\mathcal{D}$ carry material parameters denoted by $\rho_r,\kappa_r$ and $\rho_m,\kappa_m$. In acoustic metamaterials these are referred to as the density and the bulk modulus. In the Helmholtz equations, we also define the following variables
\begin{equation}
    v_{\mathrm{m}}=\sqrt{\frac{\kappa_{\mathrm{m}}}{\rho_{\mathrm{m}}}}, \quad v_{\mathcal{D}}=\sqrt{\frac{\kappa_{\mathcal{D}}}{\rho_{\mathrm{r}}}}, \quad k =\frac{\omega}{v_{\mathrm{m}}} \quad \text { and } \quad k_{\mathcal{D}}=\frac{\omega}{v_{\mathcal{D}}}.
\end{equation}
We call $\omega\in\mathbb{C}$ a resonant frequency if there exists an associated nontrivial solution to \eqref{helmholtz}. 
In this paper, we consider the problem of adding one resonator $\Omega$ to the system
\begin{equation}
        \tilde{\mathcal{D}} = \mathcal D \cup \Omega,
\end{equation}
with a different material parameter, resulting in a different wave speed $v_\Omega = \sqrt{\tfrac{\kappa_\Omega}{\rho_\Omega}}$. 
The additional inclusion with high contrast is sometimes referred to by the index $N+1$, in the sense that we implicitly identify $\Omega=\mathcal D_{N+1}$, when it is convenient (which mainly appears in the context of the capacitance matrix). The perturbed coupled system of Helmholtz equations then reads
\begin{equation}
    \label{adaptedhelmholtz}
    \begin{cases}
    \ds \Delta u + k^2u = 0 \ & \text{in } \R^3 \setminus \overline{\mathcal{D}}, \\
    \nm
    \ds \Delta u + k_{\mathcal{D}}^2 u = 0 \ & \text{in } \mathcal{D}, \\
    \nm
    \ds \Delta u + k_\Omega ^2 u = 0 \ & \text{in } \Omega, \\
    \nm
    \ds  u|_{+} -u|_{-}  = 0  & \text{on } \p \mathcal{D} \text{ and } \p \Omega, \\
    \nm
    \ds \left.\delta \frac{\p u}{\p \nu} \right|_{+} - \left.\frac{\p u}{\p \nu} \right|_{-} = 0 & \text{on } \p \mathcal{D} \text{ and } \p \Omega, \\
    \nm
    \ds u \text{ satisfies an outgoing radiation condition}. &   
    \end{cases}
\end{equation}
When the jump conditions of \eqref{helmholtz} are only imposed on $\partial \mathcal D$, then we refer to this boundary value problem as the \emph{unperturbed} system.

\subsection{Potential operators and capacitance matrices}
Throughout the manuscript, we use boundary integral equations to characterize solutions of the high-contrast limit of the Helmholtz system, in the high-contrast limit. When $\delta\rightarrow 0$ and $k\rightarrow 0$ in an appropriate sense, the problem \eqref{helmholtz} approaches the Poisson problem in a well-understood sense \cite{ammari2024functional}. This is known as the \emph{subwavelength regime}.   
We therefore introduce the boundary integral operators of the Poisson problem, where we denote the domain by an additional index, namely we write
$$S_{ \mathcal{D}}\, \colon \, H^{-1/2}(\partial \mathcal{D}) \rightarrow H^{1/2}(\partial \mathcal{D}), $$
for the single-layer boundary operator, which reads
\begin{align*}
S_{\mathcal{D}}(\varphi) 
= 
\int_{\partial \mathcal{D}}\frac{1}{4\pi |x-y|} \varphi(y) \mathrm d y.
\end{align*}
% We assume that the original domain  consists of $N$ separated inclusions, which are denoted by $$\mathcal \mathcal{D} = \cup_{j=1}^N \mathcal{\mathcal{D}}_j .$$ 
The exterior domain is referred to as $\mathcal D^+ = \mathbb R^{3}\setminus \mathcal D$.
Different characterizations of the entries of the capacitance matrix can be formulated, (see e.g. \cite{ammari.davies.ea2021Functional,cbms}). Here, we rely on the boundary integral equation formulation, which reads for $i, j=1, \ldots, N$ 
\begin{align}\label{eq:def-C}
C_{ij} = \int_{\partial \mathcal{D}_i}  S_\mathcal{D}^{-1}[\chi_{\partial \mathcal{D}_j}] (x)
\mathrm d x .
\end{align}
The resonance frequencies of the exterior Helmholtz system of resonators $\mathcal D$, which do not impose the jump conditions along the boundary of $\mathcal D$ are determined, up to defects of order $\mathcal{O}(\delta^{1/2})$, by the eigenvalues of the capacitance matrix
\begin{equation}
    \mathcal{C}_{i j}=\frac{\delta_i v_i^2}{\left|\mathcal{D}_i\right|} C_{i j}, \quad i, j=1, \ldots, N .
\end{equation}
%Here, we use the volume integral formulation 
%\begin{equation}\label{eq:unperturbed-capacitance}
% C_{i j} = \int_{\mathcal D^+}\nabla V_i \cdot \nabla V_j \, \mathrm d x,   \quad i, j=1, \ldots, N,
%\end{equation}
%where the harmonic functions $V_i\in H^{1}(\mathcal D^+)$ for $i=1,\dots,N$ fulfill the boundary value problems
%\begin{alignat}{3}\label{eq:def-Vi-1}
%\Delta V_i &= 0 \quad &&\text{in} \quad&&\mathcal D^+, \\ 
%V_i &= \delta_{ij} \quad &&\text{on}\quad \partial &&\mathcal D_j.\label{eq:def-Vi-2}
%\end{alignat}
%The systems are completed by installing the appropriate asymptotic conditions $V_i = \mathcal{O}(\frac{1}{|x|})$ for $|x|\rightarrow \infty$.

The following result gives a characterization of the resonance frequencies of the system \eqref{helmholtz}, via an explicit relation to the eigenvalues of the capacitance matrix $\mathcal C$ \cite{ammari2024functional}.
\begin{lemma}\label{lem:capacitance}
 Consider a system of $N$ subwavelength resonators in $\mathbb{R}^3$. As $\delta \rightarrow 0$, the $N$ subwavelength resonant frequencies satisfy the asymptotic formula
\begin{equation}
\omega_n=\sqrt{\lambda_n}+O(\delta), \quad n=1, \ldots, N .
\end{equation}
\end{lemma}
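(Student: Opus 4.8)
The plan is to recast the transmission problem \eqref{helmholtz} as a boundary integral equation, to identify the resonances as the characteristic values of the resulting operator pencil, and then to extract their leading-order behaviour through a joint low-frequency expansion in the subwavelength regime $\omega=O(\delta^{1/2})$.

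First I would represent any solution through single-layer potentials on $\partial\mathcal D$, using the interior wavenumber inside and the exterior wavenumber outside,
\[ u(x)=S_{\mathcal D}^{k_{\mathcal D}}[\phi](x) \text{ for } x\in\mathcal D, \qquad u(x)=S_{\mathcal D}^{k}[\psi](x) \text{ for } x\in\mathcal D^+, \]
where the outgoing radiation condition is built into $S_{\mathcal D}^{k}$. The continuity of the single-layer potential together with the standard jump relation for its normal derivative, $\partial_\nu S_{\mathcal D}^{k}[\psi]|_\pm=(\pm\tfrac12 I+(\mathcal K_{\mathcal D}^{k})^{*})[\psi]$, turns the two transmission conditions into a block system $\mathcal A(\omega,\delta)(\phi,\psi)^{\top}=0$. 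The resonant frequencies are exactly the values of $\omega$ at which $\mathcal A(\omega,\delta)$ fails to be invertible, which I would track via the Gohberg--Sigal theory of characteristic values, inserting the low-frequency expansions $S_{\mathcal D}^{k}=S_{\mathcal D}+k\,S_{\mathcal D}^{(1)}+O(k^2)$ and $(\mathcal K_{\mathcal D}^{k})^{*}=\mathcal K_{\mathcal D}^{*}+O(k^2)$.

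The heart of the argument is the reduction to the capacitance matrix, obtained by integrating the flux condition over each component $\partial\mathcal D_i$. At $\omega=0$, $\delta=0$ the flux condition decouples the interior, which reduces to the Neumann Laplace problem on $\mathcal D$, whose solution space is the $N$-dimensional space of functions constant on each $\mathcal D_i$; these are the modes from which the $N$ subwavelength resonances bifurcate, so I write $u\approx x_i$ on $\mathcal D_i$. Green's identity applied to $\Delta u+k_{\mathcal D}^2u=0$ on $\mathcal D_i$ gives $\int_{\partial\mathcal D_i}\partial_\nu u|_-\,\dx\sigma=-k_{\mathcal D}^2\int_{\mathcal D_i}u\,\dx x=-\tfrac{\omega^2}{v_i^2}|\mathcal D_i|\,x_i$ at leading order, while matching the continuity condition forces $\psi=S_{\mathcal D}^{-1}\bigl[\sum_j x_j\chi_{\partial\mathcal D_j}\bigr]$ at leading order, so that the divergence theorem yields $\int_{\partial\mathcal D_i}\partial_\nu S_{\mathcal D}^{k}[\psi]|_+\,\dx\sigma=\int_{\partial\mathcal D_i}\psi\,\dx\sigma+O(\delta)=\sum_j C_{ij}x_j+O(\delta)$ by the definition \eqref{eq:def-C} of $C_{ij}$. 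Substituting both into the integrated flux condition $\int_{\partial\mathcal D_i}(\delta_i\partial_\nu u|_+-\partial_\nu u|_-)\,\dx\sigma=0$ and absorbing the weights $\delta_i v_i^2/|\mathcal D_i|$ collapses the infinite-dimensional eigenproblem to the finite generalized eigenproblem $\mathcal C x=\omega^2 x$ at leading order. A careful accounting of the relative size of the discarded terms then upgrades this to $\omega_n=\sqrt{\lambda_n}+O(\delta)$.

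The main obstacle is this error control rather than the formal computation. I would need to show that the neglected contributions — the non-constant parts of the densities, the terms $k\,S_{\mathcal D}^{(1)}$ and $O(k^2)$ in $(\mathcal K_{\mathcal D}^{k})^{*}$, and the approximation $u\approx x_i$ on $\mathcal D_i$ — perturb the characteristic values precisely at the order claimed, which requires a quantitative characteristic-value perturbation (generalized Rouché theorem) to pass from the expansion of the operator $\mathcal A(\omega,\delta)$ to the expansion of its roots $\omega_n$. The accounting is delicate because $\sqrt{\lambda_n}=O(\delta^{1/2})$, so an additive error of size $O(\delta)$ is only a relative error of order $O(\delta^{1/2})$ and the discarded terms must be controlled at exactly that relative order. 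One must also assume $\lambda_n\neq 0$ so that the square root is differentiable and the remainder is meaningful, and track multiplicities with particular care near an exceptional point, where eigenvalues of $\mathcal C$ coalesce and the naive perturbation of $\sqrt{\lambda_n}$ degenerates.
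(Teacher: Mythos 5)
The paper does not prove this lemma at all: it is quoted verbatim from the cited reference \cite{ammari2024functional}, so there is no in-paper argument to compare against. Your outline is essentially the standard proof from that literature (single-layer representation, jump relations, low-frequency expansion of the layer potentials, Gohberg--Sigal characteristic-value perturbation, and reduction of the integrated flux condition to the generalized eigenvalue problem for the weighted capacitance matrix $\mathcal C$), and your remarks on the delicate points --- that $\sqrt{\lambda_n}=O(\delta^{1/2})$ so the $O(\delta)$ error is only a relative $O(\delta^{1/2})$, and that multiplicities must be tracked carefully near coalescing eigenvalues --- correctly identify where the real work lies. The only caveat is that what you present is a plan rather than a proof: the quantitative generalized Rouch\'e estimate that converts the operator expansion into the stated $O(\delta)$ bound on the roots is acknowledged but not carried out, and that step is precisely the substantive content of the cited result.
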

Throughout the paper, we assume that the defect $\Omega$ has some distance away from the resonator
\begin{align}\label{eq:def-dist}
    \dd = \mathrm{dist}(\mathcal D ,  \Omega)\ge c_\dd >0.
\end{align}
We track the dependency on $\dd$ for defects that are sufficiently large, in the sense that all constants in estimates may depend on $\dd$ only through the lower bound $c_\dd$.
The capacitance matrix is then determined again by the expression \eqref{eq:def-C}, now for $ i, j=1, \ldots, N+1$, which reads 
%Again, we formulate the capacitance matrix of the system, which we denote by
%\begin{equation}\label{eq:unperturbed-capacitance2}
% \tilde C_{i j} = \int_{\mathcal D^+}\nabla \tilde V_i \cdot \nabla \tilde V_j \, \mathrm d x,   \quad i,j=1,\ldots, N+1, 
%\end{equation}
%where the functions $\tilde V_i\in H^{1}(\tilde{\mathcal D}^+)$ for $i=1,\dots,N$ fulfill the corresponding boundary value problems
%\begin{alignat}{3}\label{eq:def-Vi-tilde-1}
%\Delta \tilde V_i &= 0 \quad &&\text{in} \quad&&\tilde{\mathcal D}^+, \\ 
%\tilde{V}_i &= \delta_{ij} \quad &&\text{on}\quad \partial &&\mathcal D_j \quad \text{ for } \ j = 1,\dots,N+1.\label{eq:def-Vi-tilde-2}
%\end{alignat}
%Again, we further impose the usual asymptotic conditions for $|x|\rightarrow \infty $, in order to make the solution to \eqref{eq:def-Vi-tilde-1}--\eqref{eq:def-Vi-tilde-2} unique.
\begin{align*}
\widetilde{C}_{ij} = \int_{\partial \mathcal{D}_i}  S_{\widetilde{\mathcal{D}}}^{-1}[\chi_{\partial\mathcal{D}_j}] (x)
\mathrm d x .
\end{align*}
Note that the index $N+1$ refers to the small particle $\Omega$.
The resonant structure of the perturbed system, at leading order, is now again captured by Lemma~\ref{lem:capacitance}. The objective of this note is to derive an expansion of this perturbation of the capacitance matrix, which is explicit with respect to the magnitude of the small particle $\Omega$. In fact, we assume to work in a physical regime, such that 
\begin{align}\label{eq:def-regime}
 \dfrac{|\partial \Omega|^{1/2}}{\dd} \ll 1. 
\end{align}
The additional resonator $\Omega$ is thus assumed to be either very small or far away from the resonating structure. Measuring shifts of resonance frequencies can be done with great accuracy, in many practical situations. The effect of the resonating defect particle $\Omega$ on the resonating structure, which is captured at leading order by the capacitance matrix, is therefore of primary concern in many sensing applications. 

\begin{remark}[Assumptions on the contrast of the perturbation]
   Throughout the manuscript, we assume that the perturbation $\Omega$ is itself a highly contrasted particle, which covers e.g. the setting bubbles in a fluid. Moreover, the analysis covers the perturbations of additional highly contrasted resonators far away from the resonating structure. For particles $\Omega$, which are not highly contrasted, techniques based on an additional expansion (see  \cite[Section~3.2]{AFKRYZ18}) with respect to the magnitude of the perturbation may provide effective tools to extend the present theory to particles consisting of materials that are not highly contrasted.
\end{remark}

\section{The effect of resonating perturbations on the capacitance matrix}
We start with an analysis of the effect of the additional particle on the entries of the capacitance matrix.
%We are especially iwhose magnitude $|\\mathcal D_{N+1}|\ll 1$ is assumed to be significant smaller than the size of the resonators $D_1,\dots,D_N$. Moreover, we assume that the pertubation is separated from the original resonators by some distance $d=\text{dist}(D,D_{N+1})$. We are interested in the effect of this pertubation on the capacitance matrix and its spectral properties, with a particular focus on understanding the consequences of the small magnitude and the distance of the pertubation.  

% \begin{figure}[h]
%     \centering
%     \includegraphics[width = \textwidth]{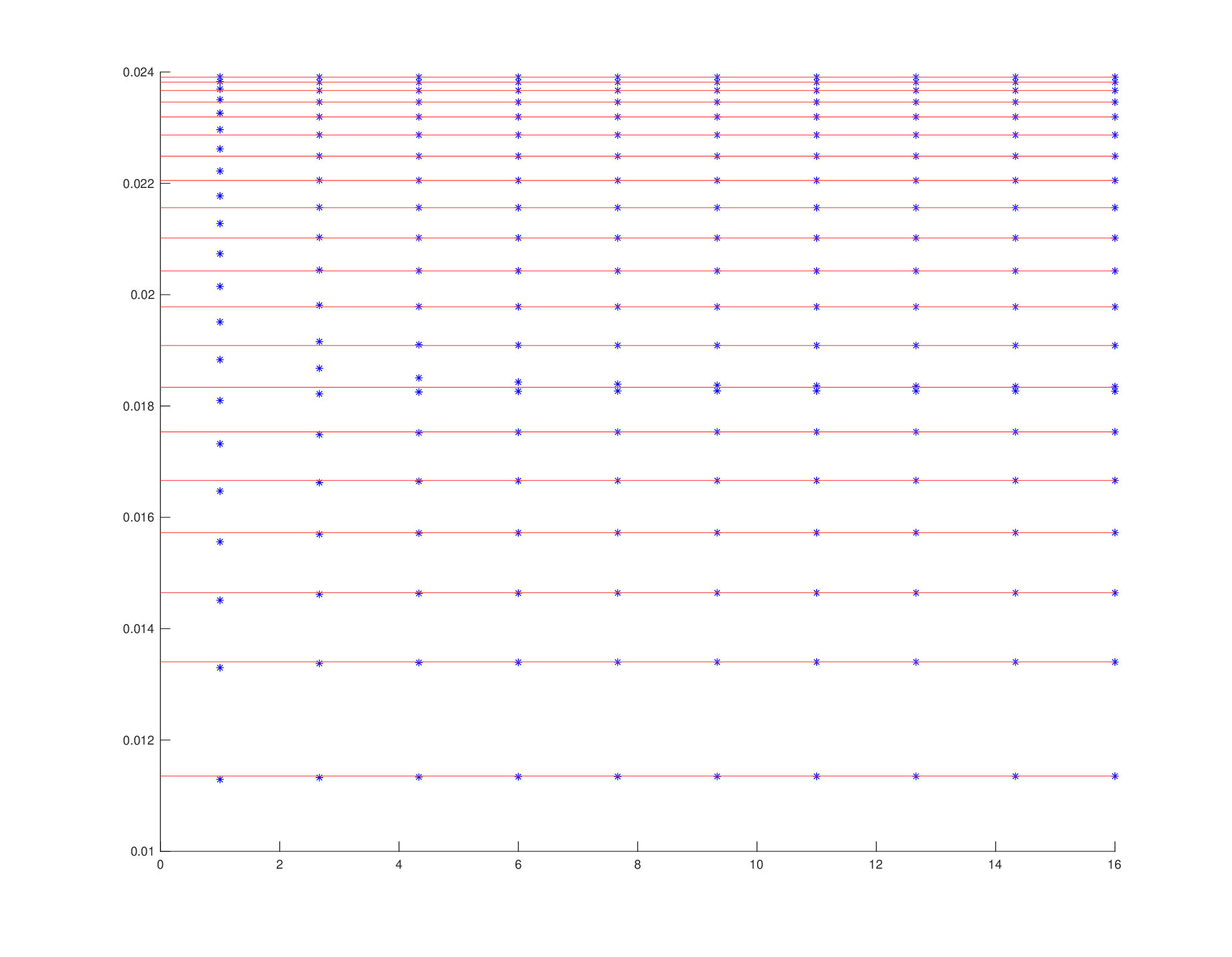}
%     \caption{Perturbation in distances}
%     \label{fig:enter-label}
% \end{figure}

% \begin{figure}[h]
%     \centering
%     \includegraphics[width = \textwidth]{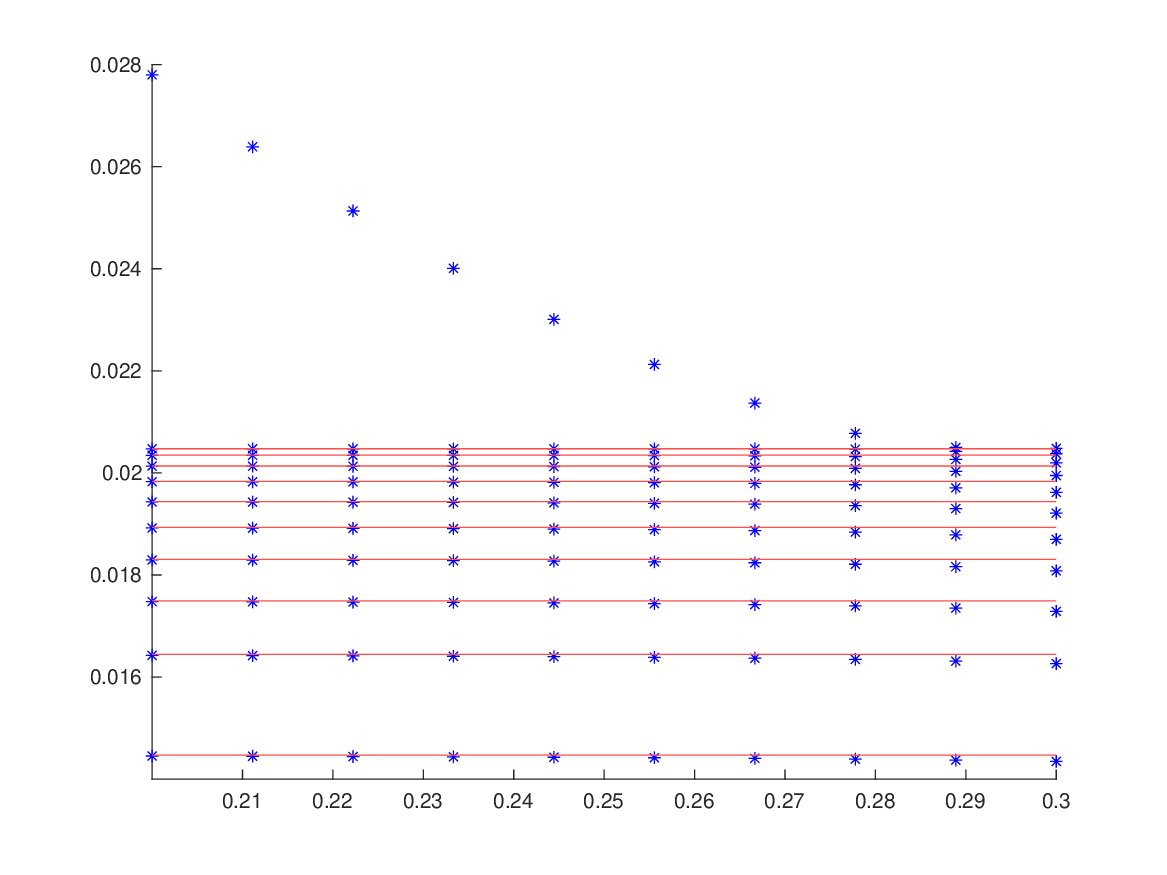}
%     \caption{Perturbation in radius}
%     \label{fig:enter-label}
% \end{figure}

\subsection{Expansion of the defects in the capacitance matrix}
We write the single-layer boundary operator on $\widetilde{\mathcal{D}}$ as the block form
\begin{align}\label{eq:SDtilde}
    S_{\widetilde{\mathcal{D}}} =
    \begin{pmatrix}
        S_{\mathcal{D}} & S_{\mathcal{D},\Omega} \\
        S_{\Omega, \mathcal{D}} & S_\Omega
    \end{pmatrix},
\end{align}
which decomposes $$S_{\widetilde{\mathcal{D}}}\, \colon \, H^{-1/2}(\partial \mathcal{D}) \times  H^{-1/2}(\partial \Omega)\rightarrow H^{1/2}(\partial \mathcal{D}) \times  H^{1/2}(\partial \Omega), $$
into its respective components. We have the following bounds, which are explicit with respect to the surface of the defect $\Omega$.
\begin{lemma}\label{lem:components-SD}
Suppose that $|\partial \mathcal{D}| = \mathcal{O}(1)$,  $|\partial \Omega| \ll 1$ and $1=\mathcal{O}(1)$ in the sense of \eqref{eq:def-dist}. 
Then, the components of the block operator \eqref{eq:SDtilde} fulfill the bounds
\begin{alignat*}{2}
&\| S_\mathcal{D}^{-1} \|_{H^{-1/2}(\partial \mathcal{D})\leftarrow H^{1/2}(\partial \mathcal{D})}&&\le c,
\\
&\| S_\Omega^{-1} \|_{H^{-1/2}(\partial \Omega)\leftarrow H^{1/2}(\partial \Omega)}&&\le c |\partial \Omega |^{-1/2}.
\end{alignat*}
Moreover, we have the following estimates for the mixed operators on the anti-diagonal
\begin{alignat*}{2}
&\| S_{\mathcal{D},\Omega} \|_{H^{1/2}(\partial \mathcal{D})\leftarrow H^{-1/2}(\partial \Omega)}&&\le c \frac{|\partial \Omega |^{1/2}}{\dd},
\\
&\| S_{\Omega, \mathcal{D}} \|_{H^{1/2}(\partial \Omega)\leftarrow H^{-1/2}(\partial \Omega)}&&\le c \frac{|\partial \Omega |^{1/2}}{\dd} .
\end{alignat*}
The constant $c$ is independent of $|\Omega|$, but depends on the geometry of the resonators and the defect $\Omega$, as well as the constant $c_\dd$.
\end{lemma}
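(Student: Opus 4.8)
The plan is to treat the four estimates in three groups, according to whether the underlying kernel is singular (the two diagonal blocks) or smooth (the two anti-diagonal blocks), and to extract all of the $\Omega$-dependence through a single rescaling of the small inclusion.

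First I would dispose of the bound on $S_\mathcal{D}^{-1}$. The operator $S_\mathcal{D}\colon H^{-1/2}(\partial\mathcal{D})\to H^{1/2}(\partial\mathcal{D})$ is the Laplace single-layer potential on the fixed domain $\mathcal{D}$, which is a coercive isomorphism by the classical theory of boundary integral operators in three dimensions (see e.g.\ \cite{ammari2024functional}). Since $\mathcal{D}$, and hence $S_\mathcal{D}$, does not depend on $\Omega$, its inverse is bounded by a constant $c$ determined only by the geometry of $\mathcal{D}$, which is the first estimate. For $S_\Omega^{-1}$ the key is to exploit the homogeneity of the kernel. Writing $\Omega = z_\Omega + \epsilon B$ for a fixed reference domain $B$ of unit size and scale $\epsilon\sim|\partial\Omega|^{1/2}$, the $(-1)$-homogeneity of $1/(4\pi|x-y|)$ under the dilation $x = z_\Omega+\epsilon\xi$, $y = z_\Omega+\epsilon\eta$ gives the exact identity $(S_\Omega\varphi)(z_\Omega+\epsilon\xi) = \epsilon\,(S_B\hat\varphi)(\xi)$, where $\hat\varphi = \varphi\circ\Phi_\epsilon$ is the pullback to $\partial B$. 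Because $S_B$ is again a fixed isomorphism, one gets $\widehat{S_\Omega^{-1}\psi} = \epsilon^{-1}S_B^{-1}\hat\psi$, and it remains to transport the $H^{\pm 1/2}$ norms through $\Phi_\epsilon$; combining the single power $\epsilon^{-1}$ from the kernel with the norm scaling yields $\|S_\Omega^{-1}\|_{H^{-1/2}(\partial\Omega)\leftarrow H^{1/2}(\partial\Omega)}\le c\,\epsilon^{-1}=c\,|\partial\Omega|^{-1/2}$.

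The anti-diagonal blocks are governed by a smooth kernel: for $x\in\partial\mathcal{D}$ and $y\in\partial\Omega$ one has $|x-y|\ge\dd\ge c_\dd>0$, so $G(x,y)=1/(4\pi|x-y|)$ is real-analytic with $|\partial_x^\alpha\partial_y^\beta G|\le c_{\alpha\beta}\,\dd^{-1-|\alpha|-|\beta|}$. For $S_{\mathcal{D},\Omega}$ I would estimate pointwise in $x$ through the duality pairing, $|S_{\mathcal{D},\Omega}\varphi(x)|\le \|G(x,\cdot)\|_{H^{1/2}(\partial\Omega)}\,\|\varphi\|_{H^{-1/2}(\partial\Omega)}$, and observe that on the small surface the $L^2(\partial\Omega)$ contribution $\|G(x,\cdot)\|_{L^2(\partial\Omega)}\le c\,\dd^{-1}|\partial\Omega|^{1/2}$ is the leading term, the Slobodeckij seminorm being of higher order in $\epsilon$. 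Differentiating under the integral sign, the same bound holds for the $x$-derivatives of $S_{\mathcal{D},\Omega}\varphi$ (each derivative costing only a factor $\dd^{-1}$, absorbed into $c=c(c_\dd)$), so the full $H^{1/2}(\partial\mathcal{D})$ norm over the fixed surface is controlled by $c\,\dd^{-1}|\partial\Omega|^{1/2}\|\varphi\|_{H^{-1/2}(\partial\Omega)}$. The bound for $S_{\Omega,\mathcal{D}}$, whose natural domain is $H^{-1/2}(\partial\mathcal{D})$, is symmetric: the smooth, slowly varying output now lives on the small surface $\partial\Omega$, and the factor $|\partial\Omega|^{1/2}$ emerges instead from its $H^{1/2}(\partial\Omega)$ norm.

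The main obstacle throughout is the behaviour of the fractional norms $H^{\pm 1/2}$ under the dilation $\Phi_\epsilon$: in the Slobodeckij definition the $L^2$ part and the seminorm scale with different powers of $\epsilon$, so a naive single-power scaling is unavailable. I would resolve this either by working with scale-adapted norms on the fixed surface $\partial B$ and checking that the equivalence constants are uniform in $\epsilon$, or by tracking the dominant balance directly; the same point is what guarantees that the $L^2$ term dominates the $H^{1/2}(\partial\Omega)$ norms of the slowly varying kernels in the anti-diagonal estimates. Once this scaling is pinned down, the three groups combine to produce exactly the stated powers of $|\partial\Omega|$ and $\dd$, with all constants depending on the geometry and on $c_\dd$ but not on $|\Omega|$.
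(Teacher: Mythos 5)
Your proposal is correct and follows essentially the same route as the paper: standard mapping properties plus a rescaling of $\Omega$ to a reference domain for the two diagonal blocks, and a duality pairing against the smooth, nonsingular kernel (with the $|\partial\Omega|^{1/2}$ factor coming from the $L^2(\partial\Omega)$ norm of the kernel or of the output) for the two anti-diagonal blocks. You in fact supply more detail than the paper does, in particular by flagging the inhomogeneous scaling of the Slobodeckij norms under dilation, which the paper subsumes into the phrase ``rescaling argument.''
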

\begin{proof}
The bounds on the inverses of $S_\mathcal{D}^{-1}$ and $S_\Omega^{-1}$ follow from standard bounds and a rescaling argument. 
For the mixed operator $S_{\mathcal{D},\Omega}$, we have the pointwise estimate, for $x\not \in \partial \Omega$
\begin{align*}
| \nabla_x S_{\mathcal{D},\Omega}(\varphi_\Omega) |
= 
\left|\int_{\partial \Omega} \nabla_x \frac{1}{4\pi |x-y|} \varphi_\Omega(y) \mathrm d y \right|
 &\le
\left\| \nabla_x \frac{1}{4\pi |x-y|} \right\|_{H^{1/2}(\partial\Omega)} \left \| \varphi_\Omega\right \|_{H^{-1/2}(\partial\Omega)}.
%\\ & \le
%C |\partial \Omega |^{1/2} \left \| \varphi_\Omega\right \|_{H^{-1/2}(\partial\Omega)}.
\end{align*}
Integrating over $\partial \mathcal{D}$ yields the estimate 
\begin{align*}
 \| S_{\mathcal{D},\Omega}(\varphi_\Omega) \|_{H^{1/2}(\partial \mathcal{D})}
 & \le c \frac{|\partial \Omega |^{\tfrac{1}{2}}}{\dd} \left \| \varphi_\Omega\right \|_{H^{-1/2}(\partial\Omega)}.
\end{align*}
Conversely, we use the same estimate chain to obtain
\begin{align*}
 \| S_{\Omega, \mathcal{D}}(\varphi_\mathcal{D}) \|_{H^{1/2}(\partial \Omega)}
 & \le c\frac{|\partial \Omega |^{\tfrac{1}{2}}}{\dd}\left \| \varphi_\mathcal{D}\right \|_{H^{-1/2}(\partial \mathcal{D})}.
\end{align*}
\end{proof}
We define the following auxiliary operators, which correspond to two reflections of the resonators $\mathcal{D}$ and the defect $\Omega$ respectively:
\begin{equation}\label{eq:def-reflections}
    T_\mathcal{D} = S_\mathcal{\mathcal{D}}^{-1} S_{\mathcal{D},\Omega}S_{\Omega}^{-1}S_{\Omega, \mathcal{D}} \quad \text{and} \quad 
T_\Omega = S_{\Omega}^{-1}S_{\Omega, \mathcal{D}} S_{\mathcal{D}}^{-1} S_{\mathcal{D}, \Omega}.
\end{equation}
These operators are bounded by the following lemma.
\begin{lemma}\label{lem:reflections}
The operators corresponding to reflections fulfill the bound
  \begin{align*}
  \left\| T_\mathcal{D}  \right\|_{H^{-1/2}(\partial \mathcal{D}) \leftarrow H^{-1/2}(\partial \mathcal{D})}  
&\le 
c\frac{|\partial \Omega |^{\tfrac{1}{2}}}{\dd},
\\
\left\|T_\Omega\right\|_{H^{-1/2}(\partial \Omega) \leftarrow H^{-1/2}(\partial \Omega) } 
    &\le c\frac{|\partial \Omega |^{\tfrac{1}{2}}}{\dd}.
\end{align*}
The constant $C$ in both estimates is independent of $|\partial \Omega|$, but in general depends on the geometry of all resonators.
\end{lemma}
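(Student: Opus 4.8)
The plan is to treat $T_\mathcal{D}$ and $T_\Omega$ as four-fold compositions of the block components of $S_{\widetilde{\mathcal{D}}}$ and to estimate each by submultiplicativity of the operator norm, inserting the four bounds already furnished by Lemma~\ref{lem:components-SD}. Before multiplying norms I would verify that the intermediate spaces match. Reading $T_\mathcal{D}=S_\mathcal{D}^{-1}S_{\mathcal{D},\Omega}S_\Omega^{-1}S_{\Omega,\mathcal{D}}$ from right to left, a density on $\partial\mathcal{D}$ is sent by $S_{\Omega,\mathcal{D}}$ into $H^{1/2}(\partial\Omega)$, by $S_\Omega^{-1}$ into $H^{-1/2}(\partial\Omega)$, by $S_{\mathcal{D},\Omega}$ into $H^{1/2}(\partial\mathcal{D})$, and by $S_\mathcal{D}^{-1}$ back into $H^{-1/2}(\partial\mathcal{D})$, so that $T_\mathcal{D}$ is a genuine endomorphism of $H^{-1/2}(\partial\mathcal{D})$; the mirror-image reading shows $T_\Omega$ is an endomorphism of $H^{-1/2}(\partial\Omega)$.

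With the composition justified, I would simply multiply the four component bounds. For $T_\mathcal{D}$ this gives
\[
\|T_\mathcal{D}\|\le \|S_\mathcal{D}^{-1}\|\,\|S_{\mathcal{D},\Omega}\|\,\|S_\Omega^{-1}\|\,\|S_{\Omega,\mathcal{D}}\|\le c\,\frac{|\partial\Omega|^{1/2}}{\dd}\,|\partial\Omega|^{-1/2}\,\frac{|\partial\Omega|^{1/2}}{\dd}.
\]
The decisive point is the bookkeeping of the powers of $|\partial\Omega|$: the singular factor $|\partial\Omega|^{-1/2}$ produced by $\|S_\Omega^{-1}\|$ is exactly absorbed by one of the two off-diagonal factors $|\partial\Omega|^{1/2}$, so the net power is $\tfrac12-\tfrac12+\tfrac12=\tfrac12$ and the right-hand side collapses to $c\,|\partial\Omega|^{1/2}\dd^{-2}$. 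The same four factors, reordered, yield the identical estimate for $T_\Omega$. To replace $\dd^{-2}$ by the claimed $\dd^{-1}$, I would invoke the standing separation hypothesis $\dd\ge c_\dd>0$, whence $\dd^{-2}\le c_\dd^{-1}\dd^{-1}$; absorbing $c_\dd^{-1}$ into the constant---which is legitimate, since all constants may depend on $\dd$ only through $c_\dd$---delivers the asserted bound $c\,|\partial\Omega|^{1/2}/\dd$.

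I do not anticipate a real obstacle: the argument is a direct chaining of Lemma~\ref{lem:components-SD}, with no further analysis required. The only steps needing attention are the consistency of the intermediate Sobolev spaces across the composition, the cancellation of the $|\partial\Omega|^{-1/2}$ singularity---which is precisely the mechanism rendering the reflection operators small as $|\partial\Omega|\to 0$---and the harmless passage from $\dd^{-2}$ to $\dd^{-1}$ through the lower bound $c_\dd$.
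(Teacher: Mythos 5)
Your proposal is correct and matches the argument the paper intends: the lemma is stated without proof precisely because it follows by chaining the four component bounds of Lemma~\ref{lem:components-SD} through the correctly matched intermediate Sobolev spaces, with the $|\partial\Omega|^{-1/2}$ singularity of $\|S_\Omega^{-1}\|$ cancelled by one off-diagonal factor $|\partial\Omega|^{1/2}/\dd$. Your bookkeeping in fact yields the sharper bound $c\,|\partial\Omega|^{1/2}/\dd^{2}$, which you legitimately weaken to the stated $c\,|\partial\Omega|^{1/2}/\dd$ using the standing assumption $\dd\ge c_\dd>0$ and the convention that constants may depend on $c_\dd$.
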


We are in a position to formulate a Neumann series expansion of the inverse of $S_{\widetilde{\mathcal{D}}}$, which formally reads 
%\begin{align}\label{eq:Neumann-series}
%    S_{\widetilde D}^{-1}
%    &=
%    \sum_{j=0}^{\infty}(-1)^j
%     \left(\begin{pmatrix}
%        S_\mathcal{D}^{-1} & 0 \\ 
%        0        & S_{\Omega}^{-1}
%    \end{pmatrix}   
%\begin{pmatrix}
%    0 & S_{D\Omega} \\
%    S_{\Omega D} & 0 
%\end{pmatrix}
%\right)^j\begin{pmatrix}
%        S_\mathcal{D}^{-1} & 0 \\ 
%        0        & S_{\Omega}^{-1}
%    \end{pmatrix}.
%\end{align}
\begin{align}\label{eq:Neumann-series}
    S_{\widetilde{\mathcal{D}}}^{-1}
    &=
    \sum_{j=0}^{\infty}(-1)^j 
\begin{pmatrix}
    0 & S_{\mathcal{D}}^{-1} S_{\mathcal{D}, \Omega} \\
    S_{\Omega}^{-1}S_{\Omega, \mathcal{D}} & 0 
\end{pmatrix}^j
    \begin{pmatrix}
        S_{\mathcal{D}}^{-1} & 0 \\ 
        0        & S_{\Omega}^{-1}
    \end{pmatrix}.
\end{align}
We note that the operator norm of the component $S_{\Omega}^{-1}S_{\Omega, \mathcal{D}}$ generally does not tend to zero as $|\partial \Omega |\rightarrow 0$.
This difficulty can be overcome by considering two reflections at once, with the help of the auxiliary operators defined in \eqref{eq:def-reflections}. The following lemma discusses the convergence properties of the Neumann series.
\begin{prop}
For $|\partial\Omega|$ sufficiently small, or $d$ sufficiently large, the series \eqref{eq:Neumann-series} is convergent. Moreover, the sequence of partial sums $P_{2K}$ (which contains the first $2K-1$ summands of \eqref{eq:Neumann-series}) fulfills the bound 
\begin{align*}
\left\| S_{\widetilde{\mathcal{D}}}^{-1} - P_{2K} \right\|_{H^{-1/2}(\partial \widetilde{\mathcal{D}})\leftarrow H^{1/2}(\partial \widetilde{ \mathcal{D}})} \le c \left(\dfrac{|\partial \Omega|^{\tfrac{1}{2}}}{d}\right)^{K-1}.
\end{align*}
The constant $C$ is independent of $|\partial \Omega|$, but may depend on the geometry of all resonators.
\end{prop}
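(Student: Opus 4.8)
The plan is to read \eqref{eq:Neumann-series} as the Neumann series for $(I+R)^{-1}\Lambda$, where I abbreviate
\[
R = \begin{pmatrix} 0 & S_{\mathcal D}^{-1}S_{\mathcal D,\Omega} \\ S_\Omega^{-1}S_{\Omega,\mathcal D} & 0 \end{pmatrix},
\qquad
\Lambda = \begin{pmatrix} S_\mathcal D^{-1} & 0 \\ 0 & S_\Omega^{-1}\end{pmatrix},
\]
both acting on $H^{-1/2}(\partial\widetilde{\mathcal D}) = H^{-1/2}(\partial\mathcal D)\times H^{-1/2}(\partial\Omega)$ (with $\Lambda$ understood as a map from $H^{1/2}$). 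Since $\Lambda^{-1}(I+R) = S_{\widetilde{\mathcal D}}$ by a direct block multiplication, the identity $S_{\widetilde{\mathcal D}}^{-1} = (I+R)^{-1}\Lambda$ holds as soon as $I+R$ is invertible, and the two claims reduce to (i) invertibility of $I+R$ with a uniform bound and (ii) a rate for the tail of $\sum_j(-1)^jR^j$.

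First I would confront the difficulty noted after \eqref{eq:Neumann-series}: the norm $\|R\|$ does \emph{not} become small as $|\partial\Omega|\to0$, because by \cref{lem:components-SD} the lower-left block obeys only $\|S_\Omega^{-1}S_{\Omega,\mathcal D}\| \le c|\partial\Omega|^{-1/2}\cdot c|\partial\Omega|^{1/2}/\dd = c/\dd$, the blow-up of $S_\Omega^{-1}$ exactly cancelling the smallness of $S_{\Omega,\mathcal D}$. The cure is to square: a block computation gives
\[
R^2 = \diag\!\left(T_\mathcal D,\,T_\Omega\right),
\]
so \cref{lem:reflections} yields $\|R^2\| \le q$ with $q := c|\partial\Omega|^{1/2}/\dd$. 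Fixing $|\partial\Omega|$ small or $\dd$ large so that $q<1$ in the regime \eqref{eq:def-regime}, I obtain $\|(R^2)^m\|\le q^m$; grouping $\sum_j(-1)^jR^j = \sum_m(R^2)^m(I-R)$ then converges geometrically, which furnishes both the convergence of \eqref{eq:Neumann-series} and a uniform bound $\|(I+R)^{-1}\|\le c$.

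For the quantitative tail estimate I would start from the remainder, factor out $(R^2)^{K-1}$, and split the summation index into its even and odd parts, which gives
\[
S_{\widetilde{\mathcal D}}^{-1} - P_{2K} = (R^2)^{K-1}\Big(\sum_{m\ge1}(R^2)^m\Lambda - \sum_{m\ge0}(R^2)^mR\Lambda\Big).
\]
The decisive bookkeeping is that the dangerous factor $\|\Lambda\|\le c|\partial\Omega|^{-1/2}$ never appears alone: in the odd sum it is paired with one power of $R$, and $\|R\Lambda\|\le c/\dd$ is bounded; in the even sum it is multiplied by $(R^2)^m$ with $m\ge1$, whose leading factor $q\sim|\partial\Omega|^{1/2}/\dd$ absorbs the $|\partial\Omega|^{-1/2}$. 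Hence both bracketed sums are bounded by a constant $c$ (geometric series of ratio $q<1$), and since $\|(R^2)^{K-1}\|\le q^{K-1}$ I conclude
\[
\big\| S_{\widetilde{\mathcal D}}^{-1} - P_{2K} \big\|_{H^{-1/2}(\partial\widetilde{\mathcal D})\leftarrow H^{1/2}(\partial\widetilde{\mathcal D})} \le c\,q^{K-1} \le c\left(\frac{|\partial\Omega|^{1/2}}{\dd}\right)^{K-1},
\]
which is the assertion.

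The main obstacle is precisely this cancellation: one must never estimate an isolated $\|R\|$ or $\|\Lambda\|$, but always keep the blow-up $|\partial\Omega|^{-1/2}$ of $S_\Omega^{-1}$ tied to a compensating $|\partial\Omega|^{1/2}$ coming from a mixed block $S_{\mathcal D,\Omega}$ or $S_{\Omega,\mathcal D}$. The even/odd splitting together with the ``two reflections at once'' operators $T_\mathcal D,T_\Omega$ makes this pairing systematic; once it is in place, only routine summation of geometric series remains.
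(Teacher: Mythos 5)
Your proposal is correct and follows essentially the same route as the paper: factor $S_{\widetilde{\mathcal D}}^{-1}=(I+R)^{-1}\Lambda$, square the off-diagonal operator to obtain $\diag(T_\mathcal{D},T_\Omega)$, split the series into even and odd powers, and use Lemmas~\ref{lem:components-SD}--\ref{lem:reflections} so that the $|\partial\Omega|^{-1/2}$ blow-up of $S_\Omega^{-1}$ is always paired with a compensating mixed block. Your tail computation is in fact a more explicit version of the paper's one-line justification of the bound on the summands of \eqref{eq:expansion-block-2}.
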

\begin{proof}
The series is derived by using 
\begin{align*}
    S_{\widetilde{\mathcal{D}}}^{-1} 
    &= 
     \begin{pmatrix}
        S_\mathcal{D} & S_{\mathcal{D},\Omega} \\
        S_{\Omega, \mathcal{D}} & S_\Omega\end{pmatrix}^{^{-1}}.
\end{align*}
We start with the convergence of the series and observe that 
\begin{align*}
\begin{pmatrix}
    0 & S_\mathcal{D}^{-1} S_{\mathcal{D}, \Omega} \\
    S_{\Omega}^{-1}S_{\Omega, \mathcal{D}} & 0 
\end{pmatrix}^2 
& = 
\begin{pmatrix}
    T_\mathcal{D} & 0  \\
    0 & T_\Omega
\end{pmatrix},
\end{align*}
where the auxiliary operators $T_\mathcal{D}$ and $T_\Omega$ are defined in \eqref{eq:def-reflections} and bounded by Lemma~\ref{lem:reflections}.

Separating odd and even powers of the power series yields
%\begin{align}\label{eq:odd-and-even}
%    \sum_{n=0}^{\infty}\left( I - \begin{pmatrix}
%    0 & S_\mathcal{D}^{-1} S_{D\Omega} \\
%    S_{\Omega}^{-1}S_{\Omega D} & 0 
%\end{pmatrix}    \right)
%\begin{pmatrix}
%    0 & S_\mathcal{D}^{-1} S_{D\Omega} \\
%    S_{\Omega}^{-1}S_{\Omega D} & 0 
%\end{pmatrix}^{2n}
%    \begin{pmatrix}
%        S_\mathcal{D}^{-1} & 0 \\ 
%        0        & S_{\Omega}^{-1}
%    \end{pmatrix},
%\end{align}
%or more specifically 
\begin{align}
     S_{\widetilde{\mathcal{D}}}^{-1} 
     &=
     \begin{pmatrix}
    I & - S_\mathcal{D}^{-1} S_{\mathcal{D},\Omega} \\
    -S_{\Omega}^{-1}S_{\Omega, \mathcal{D}} & I 
\end{pmatrix}
\sum_{n=0}^{\infty}
\begin{pmatrix}
    T_\mathcal{D}^nS_\mathcal{D}^{-1} & 0  \\
    0 & T_\Omega^nS_{\Omega}^{-1}
\end{pmatrix} \nonumber
\\ &
= 
\sum_{n=0}^{\infty}
\begin{pmatrix}
    T_\mathcal{D}^nS_\mathcal{D}^{-1} & - S_\mathcal{D}^{-1} S_{\mathcal{D}, \Omega}T_\Omega^nS_{\Omega}^{-1}   \\
       -S_{\Omega}^{-1}S_{\Omega, \mathcal{D}}T_\mathcal{D}^nS_\mathcal{D}^{-1} & T_\Omega^nS_{\Omega}^{-1}
\end{pmatrix}\label{eq:expansion-block-2}
\end{align}
The operator norms of the block operators appearing in the summands on the right-hand side are bounded by $|\partial \Omega|^{(n-1)/2}$, which gives the stated result.
\end{proof}
Inserting this result into the definition of the capacitance matrix then yields the following expansion for each matrix entry.
\begin{thm}\label{thm:expansion}
The defect of the entries of the capacitance matrix fulfill the expansions
\begin{alignat*}{2}
   \widetilde{C}_{ij} - C_{ij} &= \sum_{n=1}^{\infty}\int_{\partial \mathcal{D}_i}  
T_\mathcal{D}^nS_\mathcal{D}^{-1}[\chi_{\partial\mathcal{D}_j}]
\mathrm d x \quad &&\text{ for } \quad 1\le i,j\le N. 
\end{alignat*}
The additional entries of the capacitance matrix further fulfill the expansions
\begin{alignat*}{2}
 \widetilde C_{i ,N+1} &=  
\sum_{n=0}^{\infty}\int_{\partial \mathcal{D}_i}  
  - S_\mathcal{D}^{-1} S_{\mathcal{D}, \Omega}T_\Omega^nS_{\Omega}^{-1}[\chi_{\partial\mathcal{D}_{N+1}}]
\, \mathrm d x \quad &&\text{ for }\quad 1\le i\le N,
\\
 \widetilde C_{N+1 ,j} &=  
\sum_{n=0}^{\infty}\int_{\partial \Omega}  
  -S_{\Omega}^{-1}S_{\Omega, \mathcal{D}}T_\mathcal{D}^n S_\mathcal{D}^{-1}[\chi_{\partial\mathcal{D}_j}]
\, \mathrm d x \quad &&\text{ for }\quad 1\le j\le N.
\end{alignat*}
Finally, the last remaining additional entry of $\widetilde C$ fulfills 
\begin{align*}
   \widetilde C_{N+1,N+1} 
   = \sum_{n=0}^{\infty}\int_{\mathcal{D}_{N+1}}  
T_\Omega^n S_\Omega^{-1}[\chi_{\partial\mathcal{D}_{N+1}}]
\, \mathrm d x.
\end{align*}

\end{thm}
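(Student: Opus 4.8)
The plan is to insert the block Neumann series \eqref{eq:expansion-block-2} directly into the boundary-integral definition of each entry $\widetilde C_{ij}$, and then to read off the relevant block of $S_{\widetilde{\mathcal{D}}}^{-1}$ according to where the source and the observation are located. The key structural observation is that, viewed as an element of $H^{1/2}(\partial\mathcal{D})\times H^{1/2}(\partial\Omega)$, the source $\chi_{\partial\mathcal{D}_j}$ sits entirely in the first component when $1\le j\le N$ and entirely in the second component when $j=N+1$; dually, the integration $\int_{\partial\mathcal{D}_i}$ selects the first row-block and $\int_{\partial\Omega}$ the second. Matching the four combinations of (source block, observation block) against the four blocks in \eqref{eq:expansion-block-2} produces exactly the four asserted expansions.

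Concretely, for $1\le i,j\le N$ both source and observation sit in the resonator component, so only the $(1,1)$ block $\sum_{n\ge 0}T_\mathcal{D}^n S_\mathcal{D}^{-1}$ contributes. Here I would single out the $n=0$ term $\int_{\partial\mathcal{D}_i}S_\mathcal{D}^{-1}[\chi_{\partial\mathcal{D}_j}]\,\mathrm{d}x$, which is precisely the unperturbed entry $C_{ij}$ of \eqref{eq:def-C} since $T_\mathcal{D}^0=I$; subtracting it shifts the summation to begin at $n=1$ and yields the stated formula for $\widetilde C_{ij}-C_{ij}$. The entry $\widetilde C_{i,N+1}$ pairs a source in the $\Omega$-component with an observation in the $\mathcal{D}$-component, hence arises from the $(1,2)$ block $-\sum_{n\ge 0}S_\mathcal{D}^{-1}S_{\mathcal{D},\Omega}T_\Omega^n S_\Omega^{-1}$; symmetrically $\widetilde C_{N+1,j}$ comes from the $(2,1)$ block, and $\widetilde C_{N+1,N+1}$ from the $(2,2)$ block $\sum_{n\ge 0}T_\Omega^n S_\Omega^{-1}$. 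For these three entries no subtraction is needed, as they are newly created by the defect.

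The single point that requires justification is the interchange of the infinite sum with the surface integral. This follows from the preceding proposition, which gives convergence of the partial sums $P_{2K}$ to $S_{\widetilde{\mathcal{D}}}^{-1}$ in the operator norm on $H^{-1/2}(\partial\widetilde{\mathcal{D}})\leftarrow H^{1/2}(\partial\widetilde{\mathcal{D}})$ at the geometric rate $(|\partial\Omega|^{1/2}/d)^{K-1}$. Since each functional $\varphi\mapsto\int_{\partial\mathcal{D}_i}\varphi\,\mathrm{d}x=\langle\varphi,\chi_{\partial\mathcal{D}_i}\rangle$ is continuous on $H^{-1/2}$ (pairing against $\chi_{\partial\mathcal{D}_i}\in H^{1/2}$), term-by-term integration is permitted and every series on the right-hand side converges. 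I expect this to be the only real obstacle; the remainder of the argument is the bookkeeping of matching blocks to index pairs, which is routine once the product-space structure is made explicit.
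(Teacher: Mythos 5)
Your proposal is correct and follows essentially the same route as the paper: the paper's proof likewise reads the entries off the block expansion \eqref{eq:expansion-block-2}, identifies the $n=0$ term of the $(1,1)$ block with $C_{ij}$ so the difference starts at $n=1$, and notes that the remaining entries follow in the same way. Your explicit justification of the sum--integral interchange via the operator-norm convergence of the partial sums and duality against $\chi_{\partial\mathcal{D}_i}\in H^{1/2}$ is a detail the paper leaves implicit, but it is the intended argument.
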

\begin{proof}
The statement is a direct consequence of \eqref{eq:expansion-block-2}, which yields, for $1\le i,j\le N$
\begin{align*}
\widetilde{C}_{ij} - C_{ij}
&=
\int_{\partial \mathcal{D}_i}    \left(  S_{\widetilde{\mathcal{D}}}^{-1}
-S_\mathcal{D}^{-1}\right)[\chi_{\partial \mathcal{D}_j}] \mathrm d x
= 
\sum_{n=1}^{\infty}\int_{\partial \mathcal{D}_i}  
T_\mathcal{D}^nS_\mathcal{D}^{-1}[\chi_{\partial \mathcal{D}_j}]
\mathrm d x.
\end{align*}
The other identities follow in the same way.
\end{proof}

\begin{cor}
\label{cor:pert}
    There exists a constant $c>0$, depending only on the geometry of the resonators, such that the  entries of the capacitance matrix fulfill    
    \begin{alignat*}{2}
    \left|\widetilde{C} - \begin{pmatrix}
        C & 0 \\ 0 & 0 
    \end{pmatrix} \right| 
&\le c \dfrac{|\partial \Omega |^{\tfrac{1}{2}}}{\dd}.
    \end{alignat*}
    Moreover, collecting an additional term of the expansion yields
    \begin{align*}
        \left|\widetilde{C} - \begin{pmatrix}
        C & 0 \\ 0 & 0 
    \end{pmatrix}-\begin{pmatrix}
        E_{11} & E_{12} \\ E_{21} & E_{22} 
    \end{pmatrix} \right| 
&\le c \dfrac{|\partial \Omega |}{\dd^2},  
    \end{align*}
    where 
    \begin{align*}
      E = 
      %\begin{pmatrix}
      %      E_{11} & E_{12} \\
      %      E_{21} & E_{22}
      %\end{pmatrix}
      %  =
        \begin{pmatrix}
        \left(\int_{\partial \mathcal{D}_i}  
T_\mathcal{D}S_\mathcal{D}^{-1}[\chi_{\partial \mathcal{D}_j}]
\mathrm d x\right)_{i,j=1}^N &    -\left(\int_{\partial \mathcal{D}_i}  
   S_\mathcal{D}^{-1} S_{\mathcal{D},\Omega}T_\Omega S_{\Omega}^{-1}[\chi_{\partial \mathcal{D}_{N+1}}]
\, \mathrm d x \right)_{i=1}^N
\\
 \left(-\int_{\partial \Omega}  
  S_{\Omega}^{-1}S_{\Omega, \mathcal{D}}T_\mathcal{D} S_\mathcal{D}^{-1}[\chi_{\partial \mathcal{D}_j}]
\, \mathrm d x  \right)^N_{j=1}
&
\int_{\mathcal{D}_{N+1}}  
T_\Omega S_\Omega^{-1}[\chi_{\partial \mathcal{D}_{N+1}}]
\, \mathrm d x
        \end{pmatrix} . 
    \end{align*}
\end{cor}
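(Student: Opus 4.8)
The plan is to substitute the block expansions of \cref{thm:expansion} into each capacitance entry and to reduce every resulting boundary integral to a geometric series in the small parameter $|\partial\Omega|^{1/2}/\dd$, whose smallness is guaranteed by \eqref{eq:def-regime}. The elementary device is a duality estimate: each integrand produced by \cref{thm:expansion} lies in $H^{-1/2}$, so I would write $\int_{\partial\mathcal{D}_i} g\,\dx = \langle\chi_{\partial\mathcal{D}_i},g\rangle$ and bound it by $\|\chi_{\partial\mathcal{D}_i}\|_{H^{1/2}(\partial\mathcal{D})}\,\|g\|_{H^{-1/2}(\partial\mathcal{D})}$ (and correspondingly on $\partial\Omega$), absorbing the test-function factor into the constant through $|\partial\mathcal{D}|=\mathcal{O}(1)$. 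The operator-norm estimates of \cref{lem:components-SD,lem:reflections} then translate each summand into an explicit power of $|\partial\Omega|^{1/2}/\dd$.

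For the interior block $1\le i,j\le N$ the $n$-th summand $\int_{\partial\mathcal{D}_i}T_\mathcal{D}^n S_\mathcal{D}^{-1}[\chi_{\partial\mathcal{D}_j}]\,\dx$ is controlled by $c\,\|T_\mathcal{D}\|^n\|S_\mathcal{D}^{-1}\|\le c\,(|\partial\Omega|^{1/2}/\dd)^n$, so the defect series from $n=1$ sums to a geometric tail of order $|\partial\Omega|^{1/2}/\dd$, and isolating its $n=1$ term (the entry $E_{11}$) leaves a tail from $n=2$ of order $(|\partial\Omega|^{1/2}/\dd)^2=|\partial\Omega|/\dd^2$. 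The three blocks coupling to $\Omega$ are handled by the same mechanism, the only new ingredient being that the constant density has vanishing $H^{1/2}$-seminorm, so $\|\chi_{\partial\Omega}\|_{H^{1/2}(\partial\Omega)}=\|\chi_{\partial\Omega}\|_{L^2(\partial\Omega)}=|\partial\Omega|^{1/2}$; chained with $\|S_{\mathcal{D},\Omega}\|,\|S_{\Omega,\mathcal{D}}\|\le c\,|\partial\Omega|^{1/2}/\dd$ this exactly compensates the blow-up $\|S_\Omega^{-1}\|\le c\,|\partial\Omega|^{-1/2}$, so that the leading ($n=0$) summand of each of these blocks is already of size $|\partial\Omega|^{1/2}/\dd$ while each further power of $T_\mathcal{D}$ or $T_\Omega$ costs another factor $|\partial\Omega|^{1/2}/\dd$. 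Summing the geometric series gives the first bound; retaining in each block its lowest-order summand — the $n=1$ term of the interior block and the $n=0$ terms of the three blocks involving $\Omega$, which together form $E$ — and estimating the remaining tail yields the refined bound $c\,|\partial\Omega|/\dd^2$.

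The step I expect to be the main obstacle is exactly this bookkeeping of fractional Sobolev norms on the small surface $\partial\Omega$: one must check that every occurrence of the unbounded factor $\|S_\Omega^{-1}\|\le c\,|\partial\Omega|^{-1/2}$ is paired with either a density $\chi_{\partial\Omega}$ or a mixed operator carrying a compensating factor $|\partial\Omega|^{1/2}$, so that no summand secretly diverges as $|\partial\Omega|\to 0$, and one must correctly identify which summand is leading in each block (the $n=1$ term for the interior block but the $n=0$ term for the blocks coupling to $\Omega$, since subtracting $\begin{pmatrix}C&0\\0&0\end{pmatrix}$ removes only the interior part of the $n=0$ level). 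A secondary point, used implicitly through the hypothesis $\dd=\mathcal{O}(1)$ of \cref{lem:components-SD}, is that the corner term $\int_{\partial\Omega}S_\Omega^{-1}[\chi_{\partial\Omega}]\,\dx$ is the distance-independent capacitance of $\Omega$ of size $|\partial\Omega|^{1/2}$, which is consistent with the stated order $|\partial\Omega|^{1/2}/\dd$ only because $\dd$ is treated as bounded; once these orders are matched, the ratio $|\partial\Omega|^{1/2}/\dd<1$ from \eqref{eq:def-regime} makes both convergence and the two claimed orders immediate.
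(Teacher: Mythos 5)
Your route is the same as the paper's: its proof of \cref{cor:pert} is a single sentence invoking exactly the ingredients you deploy --- the expansions of \cref{thm:expansion}, a duality pairing $\int_{\partial \mathcal{D}_i} g \,\mathrm{d}x = \langle \chi_{\partial \mathcal{D}_i}, g\rangle$ estimated by $\|\chi_{\partial \mathcal{D}_i}\|_{H^{1/2}}\|g\|_{H^{-1/2}}$, and the operator bounds of \cref{lem:components-SD} and \cref{lem:reflections} --- and your write-up supplies the order bookkeeping the paper omits, including the two points it relies on silently (the cancellation of $\|S_\Omega^{-1}\| \le c\,|\partial\Omega|^{-1/2}$ against $\|\chi_{\partial\Omega}\|_{H^{1/2}(\partial\Omega)} = |\partial\Omega|^{1/2}$, and the boundedness of $\dd$ needed to absorb the corner entry into $c\,|\partial\Omega|^{1/2}/\dd$).

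One substantive discrepancy: the matrix $E$ your argument produces is not the matrix $E$ printed in the statement. You correctly identify the leading summands of the three blocks coupling to $\Omega$ as the $n=0$ terms, namely $-S_{\mathcal{D}}^{-1}S_{\mathcal{D},\Omega}S_{\Omega}^{-1}$, $-S_{\Omega}^{-1}S_{\Omega,\mathcal{D}}S_{\mathcal{D}}^{-1}$ and $S_{\Omega}^{-1}$ applied to the indicator densities, each of size $|\partial\Omega|^{1/2}/\dd$ (respectively $|\partial\Omega|^{1/2}$ for the corner capacitance). The printed $E_{12}$, $E_{21}$, $E_{22}$, however, carry an additional factor of $T_\Omega$ or $T_{\mathcal{D}}$ and are therefore the $n=1$ summands, of size $|\partial\Omega|/\dd^2$. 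With the printed $E$ the second estimate cannot hold: the residual would retain the $n=0$ off-diagonal contributions and the term $\int_{\partial\Omega}S_{\Omega}^{-1}[\chi_{\partial\Omega}]\,\mathrm{d}x \sim |\partial\Omega|^{1/2}$, which are of order $|\partial\Omega|^{1/2}/\dd$ rather than $|\partial\Omega|/\dd^2$. Your version of $E$ is the one for which the claimed bound is true, so your proof establishes a corrected statement; you should say so explicitly (or, equivalently, note that the $T$-factors in the last row and column of the printed $E$ appear to be a typo).
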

\begin{proof}
The statement follows by estimating the leading order term of the expansions of Theorem~\ref{thm:expansion}, via a dual argument and the given bounds of Lemmas~\ref{lem:components-SD}--\ref{lem:reflections}.
\end{proof}

\section{Perturbation of the resonances}
In the following, we leverage the bounds on the perturbation of the capacitance matrix with standard eigenvalue perturbation arguments for matrices.
Here, we use standard perturbation results found in \cite{L66} (and later reformulated and extended in \cite{MBO97}). A particular focus here is laid on the mathematical analysis of the capacitance matrix at resonance frequencies near exceptional points.

In the following, we let $\omega\in \{ \omega_1,\dots,\omega_N\}$ denote an eigenvalue of the unperturbed capacitance matrix $\mathcal{C}$. Moreover, we let 
$\tilde{\omega} \in \{\tilde{\omega}_1,\dots,\tilde{\omega}_N\}$ denote the corresponding eigenvalue of the corresponding perturbed capacitance matrix $\tilde{\mathcal C}$. The most common case is the case of simple eigenvalue, which is covered in the following lemma.
\begin{lemma}[Simple eigenvalues]\label{lem:simple-resonances}
Let $\omega$ be a simple eigenvalue of the capacitance matrix $\mathcal C$, with right- and left-eigenvectors $x$ and $y$ in $\mathbb R^N$ respectively. Then, the perturbed subwavelength resonance has the form 
\begin{align}\label{eq:expansion-simple-eigenvalue}
    \tilde{\omega} =\sqrt{\omega^2
    + \dfrac{y^{*}  D_\delta E x}{ y^{*} x}
    + \mathcal{O}\left(\dfrac{|\partial \Omega|}{d^2}\right)}.
\end{align}
We note that the simplicity of the eigenvalue $\omega$ here implies that the condition of the eigenvalue $y^*x\neq 0$ is bounded away from zero and can enter into constants of the remainder term.
Inserting the bounds on the perturbation $E$ further yields
\begin{align*}
  |   \tilde{\omega} -\omega| \le C    \dfrac{|\partial \Omega|^{1/2}}{d} .
\end{align*}
The constant $C$ also depends on $\omega$.
\end{lemma}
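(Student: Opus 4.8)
The plan is to reduce the claim to a first-order perturbation of a simple eigenvalue and then to pass to the resonant frequency via a square root. Let $D_\delta=\diag\big(\delta_i v_i^2/|\mathcal{D}_i|\big)$ denote the diagonal scaling relating the geometric and physical capacitance matrices, so that $\mathcal{C}=D_\delta C$ and $\tilde{\mathcal{C}}=D_\delta\tilde C$, and recall from Lemma~\ref{lem:capacitance} that each subwavelength resonance is, to leading order, the square root of the associated eigenvalue. I would set $\lambda=\omega^2$ and view it as an eigenvalue of the block-extended matrix $\left(\begin{smallmatrix}\mathcal{C}&0\\0&0\end{smallmatrix}\right)$, with right and left eigenvectors given by the zero-extensions of $x$ and $y$; since $\lambda\neq0$ is bounded away from the spurious zero eigenvalue produced by the extra row and column, $\lambda$ stays simple and isolated. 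Corollary~\ref{cor:pert} then supplies the perturbation as $P:=\tilde{\mathcal{C}}-\left(\begin{smallmatrix}\mathcal{C}&0\\0&0\end{smallmatrix}\right)=D_\delta\big(E+\mathcal{O}(|\partial\Omega|/d^2)\big)$.

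The leading order is governed by the classical first-order expansion for a simple, possibly non-self-adjoint eigenvalue \cite{L66}, $\tilde\lambda=\lambda+\frac{y^\ast P x}{y^\ast x}+(\text{quadratic remainder})$, in which the zero-extended eigenvectors make $y^\ast P x$ depend only on the top-left $N\times N$ block of $P$. On that block the scaling $D_\delta$ is bounded, so the leading contribution is exactly $\frac{y^\ast D_\delta E x}{y^\ast x}$, while $y^\ast(P-D_\delta E)x$ is $\mathcal{O}(|\partial\Omega|/d^2)$ since it too involves only the bounded top-left block. Taking $\tilde\omega=\sqrt{\tilde\lambda}$ and $\omega=\sqrt{\lambda}$ gives the first displayed identity, and the coarse estimate follows from $\tilde\omega-\omega=(\tilde\lambda-\lambda)/(\tilde\omega+\omega)$ together with $\tilde\lambda-\lambda=\mathcal{O}(|\partial\Omega|^{1/2}/d)$, a consequence of $E=\mathcal{O}(|\partial\Omega|^{1/2}/d)$, and $\omega$ bounded away from zero, which is also the source of the $\omega$-dependence of the constant.

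The main obstacle is that this formula cannot be applied as a black box, because the $(N+1,N+1)$ entry of $D_\delta$ scales like $1/|\Omega|$ and hence $P$ is not uniformly small in operator norm, which would invalidate the quadratic remainder. I would resolve this by a Schur-complement reduction: writing $\tilde{\mathcal{C}}=\left(\begin{smallmatrix}\mathcal{C}+A & b\\ c^\ast & \beta\end{smallmatrix}\right)$, every eigenvalue $z$ near $\lambda$ satisfies $\det\big(\mathcal{C}+A-zI-\tfrac{b c^\ast}{\beta-z}\big)=0$, so the effective perturbation of $\mathcal{C}$ is $A-\tfrac{bc^\ast}{\beta-z}$. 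The large factor $\beta\sim1/|\Omega|$ now sits in the denominator and, using the off-diagonal bounds of Lemma~\ref{lem:components-SD}, shrinks the rank-one coupling term to $\mathcal{O}(|\partial\Omega|/d^2)$, while $A=(D_\delta E)_{11}+\mathcal{O}(|\partial\Omega|/d^2)$ involves only the bounded top block of $D_\delta$. The reduced $N\times N$ problem therefore carries a genuinely small perturbation, so the first-order expansion of \cite{L66} applies rigorously with the condition number $1/|y^\ast x|$ (finite by simplicity) absorbed into the remainder constant, exactly as anticipated in the statement. The remaining ingredients — local analyticity of the square root at $\lambda=\omega^2\neq0$ and isolation of the perturbed eigenvalue for $|\partial\Omega|$ small — are routine.
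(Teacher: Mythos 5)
The paper states this lemma without any proof (the intended argument being the first-order perturbation formula of \cite{L66,MBO97} applied to the perturbation supplied by Corollary~\ref{cor:pert}), so there is no official argument to compare against line by line. Your route is the natural one, and your identification of the hidden difficulty --- that the $(N{+}1)$-st row of $\tilde{\mathcal C}$ carries the weight $\delta_{N+1}v_{N+1}^2/|\Omega|\to\infty$, so $\tilde{\mathcal C}$ is \emph{not} a small perturbation of $\bigl(\begin{smallmatrix}\mathcal C&0\\0&0\end{smallmatrix}\bigr)$ and the black-box first-order formula cannot be invoked directly --- is a genuine point that the paper glosses over; the Schur-complement reduction is the right tool for it.

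There is, however, a gap in the step where you discard the rank-one coupling $bc^{\ast}/(\beta-z)$ into the $\mathcal O(|\partial\Omega|/d^{2})$ remainder. A scaling count with the paper's own bounds gives $b_i=O(|\partial\Omega|^{1/2}/d)$, but $c_j=\tfrac{\delta_{N+1}v_{N+1}^{2}}{|\Omega|}\tilde C_{N+1,j}=O\bigl(|\partial\Omega|^{1/2}/(d\,|\Omega|)\bigr)$ and $\beta=\tfrac{\delta_{N+1}v_{N+1}^{2}}{|\Omega|}\tilde C_{N+1,N+1}\sim \mathrm{Cap}(\Omega)/|\Omega|\sim|\partial\Omega|^{1/2}/|\Omega|$ (not $1/|\Omega|$ as you write, since $\tilde C_{N+1,N+1}\sim\mathrm{Cap}(\Omega)\sim|\partial\Omega|^{1/2}$), so that
\begin{equation*}
\frac{\|b\|\,\|c\|}{|\beta-z|}\;\sim\;\frac{|\partial\Omega|^{1/2}d^{-1}\cdot|\partial\Omega|^{1/2}d^{-1}|\Omega|^{-1}}{|\partial\Omega|^{1/2}\,|\Omega|^{-1}}\;=\;\frac{|\partial\Omega|^{1/2}}{d^{2}},
\end{equation*}
which is the \emph{same} order in $|\partial\Omega|$ as the retained term $y^{\ast}D_\delta E x$ (driven by $T_{\mathcal D}=S_{\mathcal D}^{-1}S_{\mathcal D,\Omega}S_{\Omega}^{-1}S_{\Omega,\mathcal D}$, likewise $O(|\partial\Omega|^{1/2})$ up to powers of $1/d$). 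Both describe a single reflection off $\Omega$, and for a point-like particle both reduce to $\mathrm{Cap}(\Omega)$ times products of Green's-function factors, so they may reinforce or partially cancel --- but deciding which requires an explicit computation that your argument does not supply. As written, your expansion (and, arguably, the paper's statement, which pairs $E$ only against the zero-extended $x,y\in\mathbb R^{N}$ and hence only sees the $E_{11}$ block) drops a contribution of leading order. The remaining ingredients --- setting $\lambda=\omega^{2}$, the passage through the square root away from $\lambda=0$, and the coarse bound $|\tilde\omega-\omega|\le C|\partial\Omega|^{1/2}/d$ --- are fine.
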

% \jinghao{I think in the exp case, the formula should look like this}
% Let $k_j$ denote the multiplicity,
% \begin{align}\label{eq:}
%     \tilde{\omega}_{j,i} =\omega_j 
%     + \alpha |\partial \Omega|^{-(2k)}
%     + \mathcal O(|\partial \Omega|).
% \end{align}
Any practical system is technically covered by this result, since arbitrary small defects in the physical parameters or the geometries of the resonators would destroy an exact exceptional point structure. In those cases, the condition of the eigenvalues $y^*x$ is so small that the constants scaling with their inverse can not be neglected for a useful expansion following along the lines of \eqref{eq:expansion-simple-eigenvalue}. Therefore, the idealized setting of defective capacitance matrices can still provide insights by quantifying the amplification of the perturbation \eqref{eq:expansion-simple-eigenvalue} through the exceptional point present in the system.
We therefore turn to the case where the unperturbed capacitance matrix $\mathcal{C}$ is a defective matrix and $\omega$ is an exceptional point of order $k$ at $\omega$. For the sake of presentation, we further assume that $\omega$ has geometric multiplicity $g=1$ (and refer the reader to \cite{MBO97} for a detailed expansion of other settings).
The right Jordan chain with $k$ elements is generated at this point, whose elements $x_1,\dots,x_k$ are generated by $x_k$ via
\begin{align*}
x_{k-j} &= (\mathcal{C} - \omega I)^{j} x_{k}, 
\end{align*}
where $x_1$ is an ordinary eigenvector of the eigenvalue $\omega$. In the same way, we write $y_1,\dots,y_k$ for the associated left Jordan chain of the same eigenvalue. We assume the following normalization 
\begin{align*}
    Y^* X = I, \quad \text{where}\quad X  = [x_1,\dots,x_k], \quad Y^* = [
        y_k, \dots , 
        y_1].
\end{align*}
Then, we have the following expansion of the resonances at exceptional points, which shows that the effects of small defects on the resonances are greatly enhanced when the capacitance matrix has an exceptional point.
\begin{thm}[Exceptional points]\label{prop:exc-expansion}
Let $\omega$ be an eigenvalue of the capacitance matrix $\mathcal C$, with right- and left-eigenvectors $x$ and $y$ in $\mathbb R^N$ respectively, with algebraic multiplicity $r$ and geometric multiplicity $1$. Then, we have the expansion
\begin{align*}
    \tilde{\omega} = \sqrt{ \omega^2 + \xi^{1/r} + \mathcal{O}\Big(|\partial \Omega|^{\tfrac{1}{r}}d^{-\tfrac{2}{r}}\Big)},
\end{align*}
with 
$$\xi = y^* D_\delta E x \quad \text{and} \quad |\xi| = \mathcal O \Big(|\partial \Omega|^{\tfrac{1}{2r}}d^{-\tfrac{1}{r}}\Big).$$  
\end{thm}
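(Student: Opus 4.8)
The plan is to recast the statement as a perturbation problem for the defective eigenvalue $\lambda=\omega^2$ of the scaled capacitance matrix and to invoke the Lidskii perturbation theory (\cite{L66}, in the quantitative form of \cite{MBO97}) for a single Jordan block of size $r$. The square root in the statement is merely cosmetic: it suffices to expand the eigenvalue $\tilde\lambda$ branching from $\lambda$ and then substitute into $\tilde\omega=\sqrt{\tilde\lambda}$, which is legitimate because $\lambda=\omega^2\neq 0$ fixes a branch of the square root and the perturbation is small.

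First I would pass from resonances to matrix eigenvalues. By Lemma~\ref{lem:capacitance}, $\tilde\omega=\sqrt{\tilde\lambda}$, where $\tilde\lambda$ is the eigenvalue of the scaled perturbed matrix $\widetilde{\mathcal C}=D_\delta\widetilde C$ branching from $\lambda=\omega^2$, itself an eigenvalue of the reference matrix $\diag(\mathcal C,0)=D_\delta\diag(C,0)$. Corollary~\ref{cor:pert} supplies the decomposition $\widetilde{\mathcal C}-\diag(\mathcal C,0)=D_\delta E+R$; writing $\epsilon=|\partial\Omega|^{1/2}/d$, the leading part $D_\delta E$ has size $\mathcal{O}(\epsilon)$ and the remainder satisfies $\|R\|=\mathcal{O}(\epsilon^2)=\mathcal{O}(|\partial\Omega|/d^2)$ on the block governing the $\lambda$-branch. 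Since $\lambda\neq 0$, appending the zero row and column does not alter the Jordan structure at $\lambda$: the $r\times r$ Jordan block of $\mathcal C$ lifts to a single $r\times r$ Jordan block of $\diag(\mathcal C,0)$, and the right/left eigenvectors $x,y$ acquire a vanishing $(N+1)$-st component. Consequently the (possibly singular) defect scaling $\delta_{N+1}v_{N+1}^2/|\Omega|$ never enters the pairing $\xi=y^*D_\delta E x$ and affects only the separate eigenvalue branch near $0$, which stays isolated from $\lambda$.

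Next I would apply the Lidskii expansion with the Jordan-chain normalization $Y^*X=I$ fixed above. For a single Jordan block of size $r$, the $r$ eigenvalues emanating from $\lambda$ under the perturbation $\Delta=D_\delta E+R$ admit the first-order expansion
\begin{align*}
\tilde\lambda=\lambda+\xi^{1/r}+o(\epsilon^{1/r}),\qquad \xi=y^*D_\delta E x,
\end{align*}
where the $r$ values of $\xi^{1/r}$ index the $r$ branches. The order bookkeeping then gives $|\xi|=\mathcal{O}(\epsilon)$, hence $|\xi^{1/r}|=\mathcal{O}(\epsilon^{1/r})=\mathcal{O}(|\partial\Omega|^{1/(2r)}d^{-1/r})$, the magnitude recorded in the statement; meanwhile the Lidskii scaling—a perturbation of magnitude $\eta$ displaces an eigenvalue belonging to an $r\times r$ Jordan block by $\mathcal{O}(\eta^{1/r})$—turns the $\mathcal{O}(\epsilon^2)$ remainder $R$ into a contribution $\mathcal{O}((\epsilon^2)^{1/r})=\mathcal{O}(|\partial\Omega|^{1/r}d^{-2/r})$ to $\tilde\lambda$. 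Substituting into $\tilde\omega=\sqrt{\tilde\lambda}$ yields the claimed formula.

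The main obstacle is the rigorous, geometry-uniform justification of the Lidskii expansion in the previous step. The classical theorem is stated for an analytic one-parameter family $A+\epsilon B$, whereas here both the magnitude and the direction $D_\delta E/\epsilon$ of the perturbation depend on the defect $\Omega$, and $\Delta$ is only controlled as a leading term plus an $\mathcal{O}(\epsilon^2)$ remainder rather than analytically. I would therefore use the non-analytic formulation of \cite{MBO97} and verify two points: that the leading coefficient $\xi=y^*D_\delta E x$ is nonzero (the generic non-degeneracy condition, without which the splitting exponent exceeds $1/r$), and that the remainder $R$ contributes only at order $\mathcal{O}(\epsilon^{2/r})$ with constants independent of $|\partial\Omega|$ and uniform in $d$ through $c_d$. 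The latter is the delicate estimate: it amounts to controlling the resolvent $(\diag(\mathcal C,0)-zI)^{-1}$ on a contour of radius $\sim\epsilon^{1/r}$ about $\lambda$ and tracking how the $1/r$-power singularity amplifies the two pieces of $\Delta$ at their respective scales, which is precisely where the enhanced sensitivity of the exceptional point is quantified.
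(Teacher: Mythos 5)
Your proposal follows essentially the same route as the paper's (very brief) proof: combine the multiple scattering expansion and the construction of $E$ from Corollary~\ref{cor:pert} with the Lidskii--Moro--Burke--Overton perturbation theory for a size-$r$ Jordan block, reading the stated bound as applying to $\xi^{1/r}$. Your additional care about the non-analytic perturbation family, the non-degeneracy condition $\xi\neq 0$, and the isolation of the appended $(N+1)$-st eigenvalue branch goes beyond what the paper records, but does not change the argument.
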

\begin{proof}
The statement follows from the combination of the multiple scattering expansion \eqref{thm:expansion}, the construction of $E$ and the perturbation theory of eigenvalues of matrices \cite{L66} (see e.g. \cite[Theorem~2.1]{MBO97}).
\end{proof}
\begin{remark}[Other expansions]
 Natural extensions of Proposition~\ref{prop:exc-expansion} are available from analyzing more complicated block Jordan structures (see \cite{MBO97}) or including more terms of the multiple scattering expansion of Theorem~\ref{thm:expansion}.   
\end{remark}

%\joerg{This, however, is in contradiction to \eqref{eq:expansion-simple-eigenvalue}. It would be in accordance with the simple case, if we assume that the left and right eigenvectors are normalized in the sense that $x^*y =1 $. I could not find the corresponding normalization in \cite{MBO97}.}
%\jinghao{I think the normalization is encoded in (2.2) in the paper}
%, that is, there exists invertible matrix $V$ such that 
%\begin{equation}
%    P^TVD_\delta C V^{-1}P = P^TJP = \begin{pmatrix}
%        \omega_j & 1 \\ 0 & \omega_j
%    \end{pmatrix},
%\end{equation}
%where $J$ is a Jordan matrix with an order $k$ Jordan block. From (\ref{cor:pert}) we know that $\tilde{\mathcal{C}} = \mathcal{C} + \overline{\mathcal{C}} $ with $\bnorm{\overline{\mathcal{C}}} = \mathcal{O}(|\partial \Omega |^{1/2})$ with $\bnorm{}$ being the operator norm. Hence, the eigenvalues of $\tilde{\mathcal{C}}$ correspond to these of $J+V^{-1}\overline{\mathcal{C}}V$. \href{https://www.sciencedirect.com/science/article/pii/002437959290263A}{this paper} tells us the eigenvalues are perturbed to the following order:
%\begin{equation}
%    \tilde{w}_j = \omega_j + \mathcal{O}((|\partial \Omega |^{1/2})^{1/k}).
%\end{equation}
%Here is another reference making the coefficient of the perturbation explicit:
%\url{https://cs.nyu.edu/~overton/papers/pdffiles/lidskii.pdf} Theorem 2.1.

\section{Numerical experiments}
We consider a toy problem of reconstructing the location and magnitude of a (spherical) small particle $\Omega$ from shifts in the subwavelength resonant frequencies $\omega_1,\dots,\omega_N$. 
A simple approach to this problem reads: Let  $\omega^{\text{mes}}_1,\dots,\omega^{\text{mes}}_N$ be the measured subwavelength resonances and further let $\ell$ by
\begin{align}\label{eq:loss}
\ell(p) := \sum_{j=1}^N \left|\omega_j^{\text{mes}} - \omega_j (p)  \right|^2,
\end{align}
where $\omega_j (p)$ is the associated perturbed subwavelength frequencies and the perturbation is determined by the parameters collected in $p$. 

Consider a chain of resonators, with three spheres of radii $\tfrac{1}{3}$ and the centers $$z_j = (j-1,0,0)^T\in \mathbb R^d\quad  \text{for} \quad j=1,2,3.$$ We study the effects of an additional small perturbation at the center $z = (3,0,0)^T$, in the following two experiments.
\subsubsection*{Effect of perturbations as $|\Omega|\rightarrow 0$}
First, we consider a sequence of decreasing radii and consider the effect on the spectrum of the capacitance matrix. The perturbation affects subwavelength resonances at exceptional points substantially stronger, as is observed in Figure~\ref{fig:pert-r}. Here, physical parameters that correspond to a system with an exceptional point have been constructed via \cite{ADHHY21}.
In particular, we observe the predicted rates of Corollary~\ref{cor:pert}.
\begin{figure}
    \centering  \includegraphics[width=0.6\linewidth]{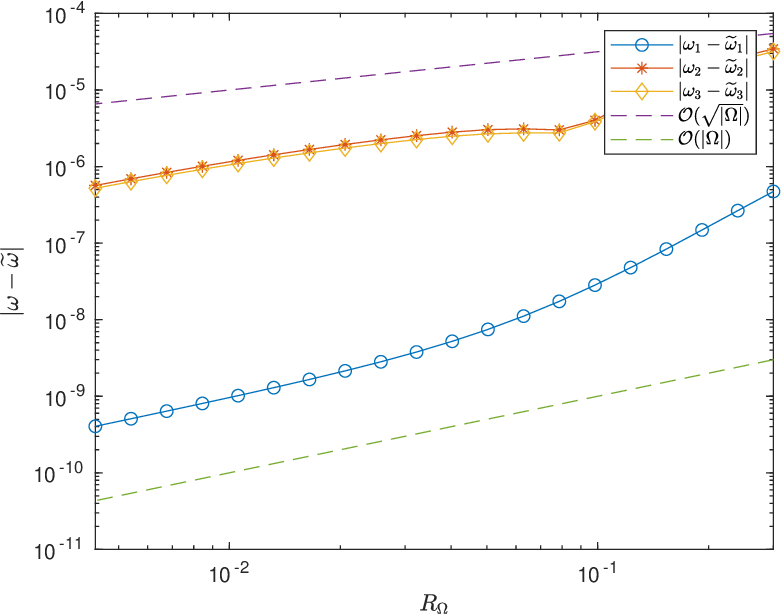}
    \caption{Perturbation of the simple subwavelength resonance ($\omega_1$) and the degenerate resonance at $\omega_2$ and $\omega_3$, when an additional highly contrasted resonator with a decreasing radius is present.}
    \label{fig:pert-r}
\end{figure}
\subsubsection*{The idealized loss $\ell$} Although the expression \eqref{eq:loss} is not derived analytically, however, we can still quickly evaluate it using the capacitance matrix approximation. (In our experiments, we used structures composed of spheres, where the spatial discretization is conducted with spherical harmonics). We approximate the spectrum of the system with radius $10^{-4}$. The position of the defect now varies along the $z$-axis. We note that the setting is radially symmetric and therefore the $z=0$ plane with $y \ge 0$ fully determines the idealized loss function. Here, $\ell_*\approx 0$ corresponds to the centers of perturbations (of magnitude $10^{-4}$), which would cause the observed shifts in the resonant frequencies.

 Figure~\ref{fig:loss} visualizes the logarithm of the loss. We observe that a path of centers opens in the $z=0$ plane, which explain the shifts in resonances, which by the radial symmetry of the setup implies a $2-$ dimensional subset of $\mathbb R^3$, which would explain the measurements observed. Note that the exact perturbation is at $z_\Omega = (3,0,0)^T$, with a radius of $r_\Omega =10^{-4}$. We note that the loss increases substantially when an exceptional point is present, as seen in the right plot.
 \begin{figure}
   \hspace{-0cm}\includegraphics[width=1.1\linewidth,trim=30mm 0mm 0mm 0mm,clip]{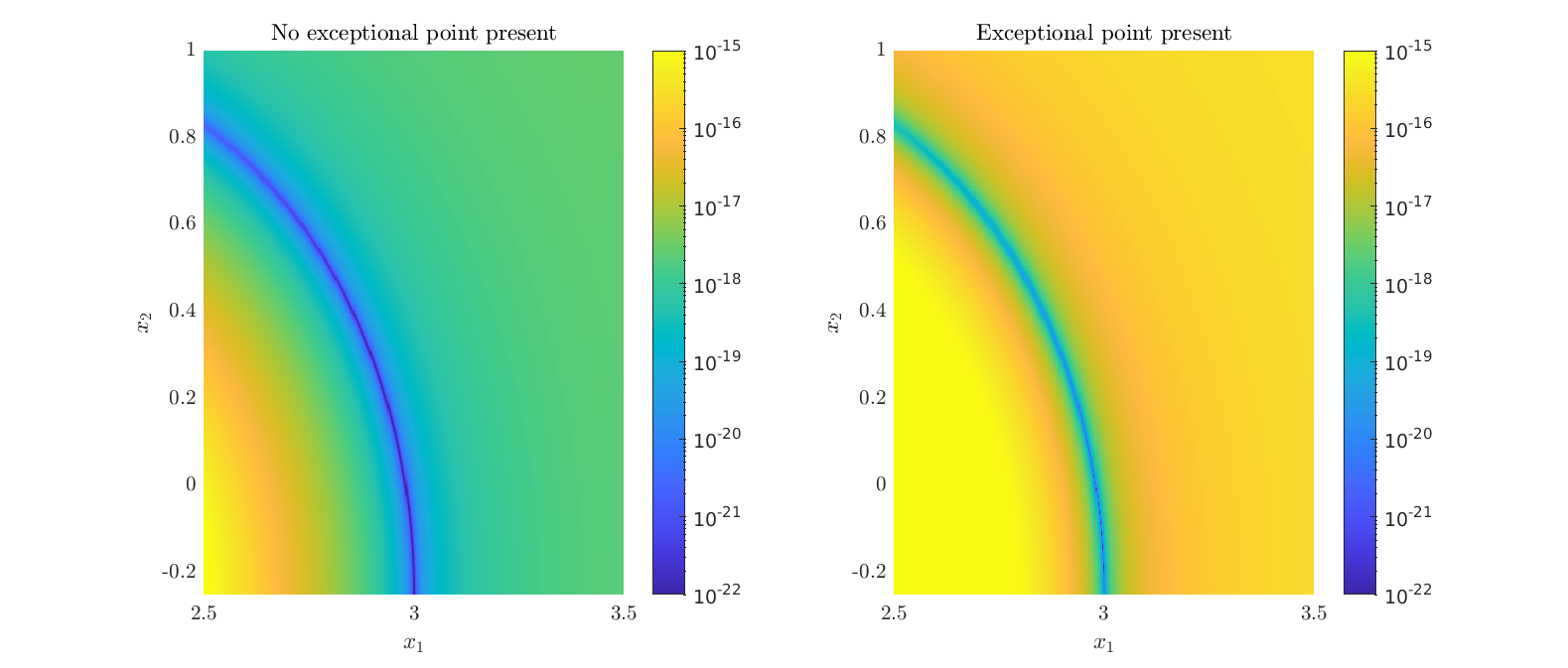}
     \caption{Visualization of the  loss $\ell$, visualized on the $x_3=0$-plane, for a fixed radius $R_\Omega=10^{-4}$. The exact center is set at $z = (3,0,0)^T$.}
     \label{fig:loss}
 \end{figure}

\subsubsection*{A sensing application} 
We are now interested in the effects of an exceptional point on basic sensing applications.
The radius of the spherical perturbation with the fixed and known an radius $R_\Omega = 10^{-2}$. 
The center at $z_\Omega = (3,0,0)$ is to be reconstructed from the subwavelength resonators.   As a simple sensing scheme, we use a steepest descent method applied to the loss shown in Figure \eqref{fig:loss}. 
This steepest descent method then reads
\begin{align}\label{eq:descent-method}
  p_{k+1} = p_k - \lambda^k  \nabla_p \ell_*(p).
\end{align}
We approximate $\nabla_p \ell$ by the central difference quotient. We use the radial symmetry to further eliminate a degree of freedom in the perturbation, by restricting the computation to the $z=0$ plane. The results are depicted in Figure~\ref{fig:steepest-descent}, where the red dots denote some initial guesses $z_0\in [2.5,3.5]\times [0,1]$ and the green dots visualize the iterations after $20$ iterations of the 
For our experiment, we used $\lambda = \tfrac{9}{10}$. The resulting method projects the initial guesses onto the surface of plausible perturbations, that is observed from observing the graphs in Figure~\ref{fig:loss}. In that sense, the methodology could be thought of as a building block of a larger method, that either uses multiple structures or additional information on the perturbations from further measurements. Moreover, using a more complicated structure of resonators, which in particular break symmetries, could provide settings where more information on the perturbation can be drawn from the shifts in the resonance frequencies.

We simulate noise by sampling from a uniform distribution $\eta\in [-\epsilon_{\text{noise}} ,\epsilon_{\text{noise}} ]$, for various maximal levels of noise $\epsilon_{\text{noise}}\ll 1$ and then use \eqref{eq:descent-method} with the loss function applied to noisy measurements, i.e.,  
\begin{align}\label{eq:idealized-loss-noisy2}
\ell^\eta(p) = \sum_{j=1}^N \left|(1+\eta_j )\omega_j^{\text{mes}} - \omega_j (p)  \right|^2.
\end{align}
We apply this methodology to a set of initial nodes for various levels of noise and different initial nodes. To visualize the effects of this noise, we sample $\eta$ and draw $100$ evaluations, for each of which we apply the method \eqref{eq:descent-method} with the noisy measurements as described in \eqref{eq:idealized-loss-noisy2}. The results are reported in Figure~\ref{fig:steepest-descent}, which presents the results for both resonating structures that contain exceptional points and those that do not. Then, when the noise levels increase, the existence of an exceptional point significantly enhances the signal-to-noise ratio and consequently allows the reconstruction of the small particle from measurements of the shift in the subwavelength resonances of the system.
\begin{remark}

When an exceptional point is present in the spectrum of the capacitance matrix, the loss \eqref{eq:loss} and subsequently the descent method \eqref{eq:descent-method} 
 are dominated by changes in the exceptional points. Using a weighted loss
\begin{align}\label{eq:idealized-loss2}
\ell^\alpha(p) := \sum_{j=1}^N \alpha_j\left|\omega_j^{\text{mes}} - \omega_j (p)  \right|^2,
\end{align}
can be used to balance the magnitude of these terms, which may increase the amount of information extracted from the shifts in the spectrum. In our experiments with three resonators along a chain, no substantial improvements were observed. Moreover, we note that large factors $\alpha_j\gg 1$ amplify noise to a substantial degree, which must be balanced with any advantage of such a scaling.
\end{remark}

\begin{figure}
    \centering  \includegraphics[width=0.9\linewidth]{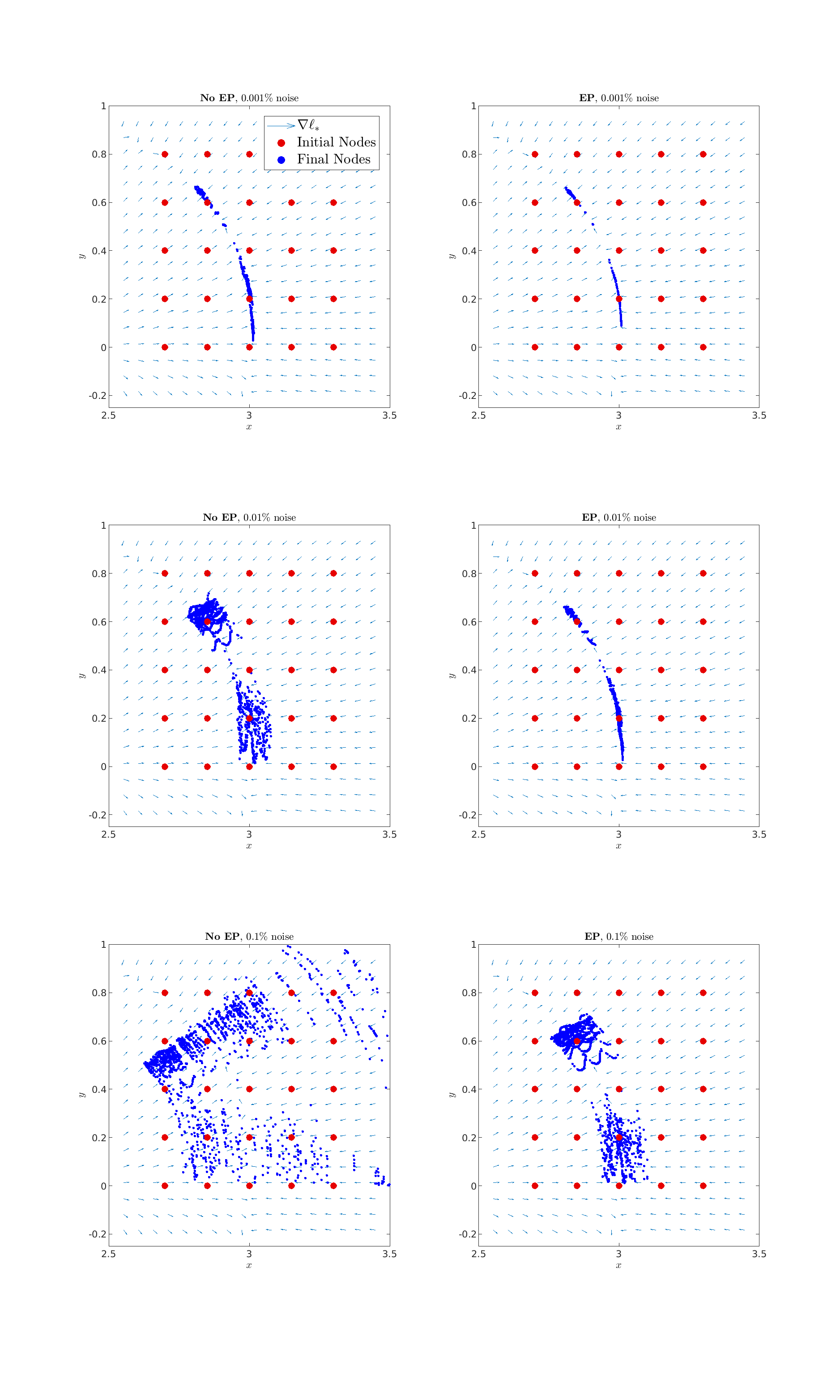}
    \caption{Applying the steepest descent method \eqref{eq:descent-method} to a set of nodes near the exact perturbation (sitting at $z=(3,0,0)$). On the right plots, the physical parameters have been tuned, such that the capacitance matrix has an exceptional point. From the top down an increasing level of noise is simulated in the measurements.}
    \label{fig:steepest-descent}
\end{figure}
\section{Acknowledgment}
The authors thank Habib Ammari for his helpful comments during the writing of the manuscript.
\appendix
% \newpage
\printbibliography
\end{document}